\documentclass[a4paper,11pt]{article}
\usepackage[utf8]{inputenc}
\usepackage[T1]{fontenc}
\usepackage{geometry}

\usepackage[italian,english]{babel}
\usepackage{amsfonts}
\usepackage{subcaption}
\usepackage{amssymb}
\usepackage{bbm}
\usepackage{amsmath}

\usepackage{comment}

\usepackage{amsthm}
\usepackage{mathtools}
\usepackage{graphicx}
\usepackage{dsfont}

\usepackage{float}

\theoremstyle{plain}
\newtheorem{thm}{Theorem}[section]
\newtheorem{lem}{Lemma}[section]
\newtheorem{prop}{Proposition}[section]

\theoremstyle{definition}
\newtheorem{defn}{Definition}[section]

\theoremstyle{remark}
\newtheorem{rem}{Remark}[section]

\usepackage{pgfplots}
\usepackage{tikz}

\usepackage{abstract}


\usepackage{todonotes}

\usepackage{xstring}
\usepackage{standalone}
\usepackage{amsfonts}
\usepackage{hyperref}


\newcommand{\tinyspace}{\hspace{0.1em}}
\newcommand{\nnl}{\notag\\}

\newcommand{\Reals}{\mathbb R}
\newcommand{\cadlag}{\ifmmode\mathcal D\else c\`adl\`ag \fi}
\newcommand{\Skorochod}{Skorokhod}
\newcommand{\defeq}{:=}
\newcommand{\id}{\mathrm{id}}

\newcommand{\Naturals}{\mathbb N}
\newcommand{\Integers}{\mathbb Z}
\newcommand{\dconv}{\xrightarrow{\:\: \text{d} \:\: }}
\newcommand{\asconv}{\xrightarrow{\:\: \text{a.s.} \:\: }}

\newcommand{\E}[1]{\mathbb E[#1]}
\newcommand{\inDJ}[1]{
    \IfEqCase{#1}{
        {1}{\qquad \text{in } (\cadlag,J_1)}
        {2}{\qquad \text{in } (\cadlag,J_2)}
    }[\PackageError{inDJ}{Undefined option to inDJ: #1}{}]%
}

\newcommand{\inDM}[1]{
    \IfEqCase{#1}{
        {1}{\qquad \text{in } (\cadlag,M_1)}
        {2}{\qquad \text{in } (\cadlag,M_2)}
    }[\PackageError{inDM}{Undefined option to inDM: #1}{}]%
}
\newcommand{\inDPJ}[1]{
    \IfEqCase{#1}{
        {1}{\qquad \text{in } (\cadlag^{+},J_1)}
        {2}{\qquad \text{in } (\cadlag^{+},J_2)}
    }[\PackageError{inDPJ}{Undefined option to inDPJ: #1}{}]%
}
\newcommand{\inDPM}[1]{
    \IfEqCase{#1}{
        {1}{\qquad \text{in } (\cadlag^{+},M_1)}
        {2}{\qquad \text{in } (\cadlag^{+},M_2)}
    }[\PackageError{inDPM}{Undefined option to inDPM: #1}{}]%
}

\newcommand{\RW}[1][n]{S_{#1}}
\newcommand{\avgRW}{\mu}
\newcommand{\fluidscaledRW}[1][n]{\bar{S}^{(#1)}}
\newcommand{\diffusionscaledRWnomean}[1][n]{\hat{S}^{(#1)}}
\newcommand{\diffusionscaledRW}[1][n]{\tilde{S}^{(#1)}}
\newcommand{\diffusionlimitRW}[1][\alpha]{W^{(#1)}}

\newcommand{\environment}[1][n]{\omega_{#1}}
\newcommand{\avgenvironment}{\nu}
\newcommand{\fluidscaledenvironment}[1][n]{\bar{\omega}^{(#1)}}
\newcommand{\diffusionscaledenvironmentnomean}[1][n]{\hat{\omega}^{(#1)}}
\newcommand{\diffusionscaledenvironment}[1][n]{\tilde\omega^{(#1)}}
\newcommand{\diffusionlimitenvironment}[1][\beta]{Z^{(#1)}}

\newcommand{\flight}[1][n]{Y_{#1}}
\newcommand{\fluidscaledflight}[1][n]{\bar{Y}^{(#1)}}
\newcommand{\diffusionscaledflight}[1][n]{\tilde{Y}^{(#1)}}
\newcommand{\diffusionscaledflightnomean}[1][n]{\hat{Y}^{(#1)}}


 \def \l {{\lambda}}

\newcommand{\R} {\mathbb{R}}

\newcommand{\Z} {\mathbb{Z}}
\newcommand{\N} {\mathbb{N}}

\newcommand{\ds} {\displaystyle}

\newcommand{\article}[3] {\textsc{{#1}}, {\itshape {#2}}, {{#3}}.}
\newcommand{\book}[3] {\textsc{{#1}}, {\itshape {#2}}, {{#3}}.}
\newcommand{\vol} {\textbf}
\newcommand{\eps} {\varepsilon}

\newcommand{\into} {\longrightarrow}

\title{Limit theorems for L\'evy flights \\[4pt]
on a 1D L\'evy random medium}

\author{
Samuele Stivanello
\thanks{Dipartimento di Matematica,
Universit\`a di Padova, Via Trieste 63,
35121 Padova, Italy. 
stivanel@math.unipd.it,  
bianchi@math.unipd.it, 
elenam@math.unipd.it}
\and
Gianmarco Bet
\thanks{Dipartimento di Matematica e Informatica “Ulisse Dini”, 
Universit\`a di Firenze, Viale Morgagni 65
50134 Firenze, Italy. gianmarco.bet@unifi.it}
\and
Alessandra Bianchi $^*$
\and
Marco Lenci
\thanks{Dipartimento di Matematica, Universit\`a di Bologna,
Piazza di Porta San Donato 5, 40126 Bologna, Italy. marco.lenci@unibo.it}
\thanks{Istituto Nazionale di Fisica Nucleare,
Sezione di Bologna, Via Irnerio 46,
40126 Bologna, Italy.}
\and 
Elena Magnanini $^*$
}

\date{Accepted for publication  in \emph{Electronic Journal of Probability} 
\\[4pt]
April 2021}

\begin{document}
\maketitle

\begin{abstract}
  We study a random walk on a point process given by an ordered array
  of points $( \omega_k, \, k \in \Z)$ on the real line. The distances
  $\omega_{k+1} - \omega_k$ are i.i.d.\ random variables in the domain of
  attraction of a $\beta$-stable law, with $\beta \in (0,1) \cup (1,2)$.
  The random walk has i.i.d.\ jumps such that the transition probabilities between
  $\omega_k$ and $\omega_\ell$ depend on $\ell-k$ and are given by the
  distribution of a $\Z$-valued random variable in the domain of attraction of an
  $\alpha$-stable law, with $\alpha \in (0,1) \cup (1,2)$. Since the defining
  variables, for both the random walk and the point process, are heavy-tailed,
  we speak of a \emph{L\'evy flight on a L\'evy random medium}. For all
  combinations of the parameters $\alpha$ and $\beta$, we prove the annealed
  functional limit theorem for the suitably rescaled process, relative to the optimal
  Skorokhod topology in each case. When the limit process is not c\`adl\`ag, we
  prove convergence of the finite-dimensional distributions. When the limit
  process is deterministic, we also prove a limit theorem for the fluctuations,
  again relative to the optimal Skorokhod topology.
  \\[6pt]
  \textbf{Mathematics Subject Classification (2020):} 60G50; 60G55; 60F17;
  82C41; 60G51
  \\[6pt]
  \textbf{Keywords:} Random walk on point process; L\'evy random medium;
  L\'evy flights; Stable distributions; Anomalous diffusion; Stable processes.
\end{abstract}

\section{Introduction}

The expression `L\'evy random medium' indicates a stochastic point process, in
some space, where the distances between nearby points have heavy-tailed
distributions. Processes of this kind have been receiving a surge of attention, of late,
both in the physical and mathematical literature; cf., respectively,
\cite{bfk, s, bcv, bdlv, zdk, vbb} and \cite{bcll, blp, ms, zamparo}. They model a variety
of situations that are of interest in the sciences. In particular, they are used
as supports for various kinds of random walks, in order to study phenomena of
anomalous transport and anomalous diffusion. An incomplete list of general or
recent references on this topic includes \cite{szf, krs, cgls, zdk, acor, roac, roap}.

The random medium that we consider in this paper is perhaps the most
natural choice for a L\'evy random medium in the real line: a sequence of
random points $\omega = ( \omega_k, \, k \in \Z)$, where $\omega_0=0$ and
the nearest-neighbor distances $\zeta_k = \omega_{k+1} - \omega_k$ are i.i.d.\ 
variables in the normal domain of attraction of a $\beta$-stable variable, with
$\beta \in (0,1) \cup (1,2)$. Here $\beta$ is the index of the
stable distribution, not the skewness parameter, which equals 1 because
$\zeta_k >0$.

A random walk $Y = (Y_n, \, n \in \N)$ takes place on $\omega$ according
to the following rule. Independently of $\omega$, there exists a random walk
$S = ( S_n, \, n \in \N)$
on $\Z$ with $S_0=0$ and i.i.d.\ increments in the normal domain of
attraction of an $\alpha$-stable variable, with $\alpha \in (0,1) \cup (1,2)$.
We define $Y_n := \omega_{S_n}$.
This means that the process $Y$
performs the same jumps as $S$, but on the marked points $\omega_k$
instead of $\Z$. For example, if a realization of $S$ is $(0, 3, -1, \ldots)$,
the process $Y$ starts at the origin of $\R$, then
jumps to the third marked point to the right of $0$, then to the first marked point
to the left of $0$, and so on. In other words, $S$ \emph{drives} the dynamics
of $Y$ on the L\'evy medium. For this reason we call it the \emph{underlying
random walk}.

Our process of interest is $Y$. We may describe it as a \emph{L\'evy
flight on a one-dimensional L\'evy random medium}. This phrase is
borrowed from the physical literature, where the term `L\'evy flight' usually
indicates a discrete-time random walk with long-tailed instantaneous
jumps. This is in contrast to a `L\'evy walk', which in general designates
a \emph{persistent}, continuous- or discrete-time, random walk with
long-tailed trajectory segments that are run at constant finite speed
\cite{zdk}. A L\'evy walk is often seen as an interpolation of a L\'evy flight. For
example, an important process from the standpoint of applications is
$X := ( X(t), \, t \in [0, +\infty))$, the unit-speed interpolation of $Y$. This means that,
for any realization of $Y$, a trajectory of $X$ starts at the origin and visits all
the points $Y_n$ in the given order, traveling between them with velocity $1$ or
$-1$, depending on $Y_{n+1}$ being to the right or to the left of $Y_n$,
respectively. The walk $X$ is a generalization of a system that first
appeared in the physical literature 20 years ago with the name
\emph{L\'evy-Lorentz gas} \cite{bfk} (more precisely, the L\'evy-Lorentz gas is
the case where the underlying random walk is simple). It was devised as a
one-dimensional toy model for the study of anomalous diffusion in porous
media \cite{l, bfk, bcv}. See \cite{bcll, blp, zamparo} for recent mathematical results.

There are several reasons to study our L\'evy flight on random medium. The
most self-serving, on the part of the present authors, is to build a basis to
investigate the properties of the associated L\'evy walk, as described above
(see the proofs in \cite{bcll, blp}). Also, $Y$ can be thought of as the limit of a
continuous-time random walk with resting times on the points $\omega_k$,
when the ratio between the speed of the walker and the typical resting time diverges.
This can be used to model a variety of situations where a given dynamics is very
fast compared to its ``decision times'', e.g., electronic signal on a network whose
nodes act as relatively slow processing stations; human mobility 
(assuming, as is often the case, that resting times are substantially longer than travel times);
etc. This particular model aside, there is no lack of general motivation for the
study of random walks on points processes, especially in light of the fact that the
topic is regrettably less developed than others in the field of random walks, with
the exception perhaps of random walks on percolation clusters \emph{et similia}.
For some interesting lines of research see, e.g., \cite{cf, cfp, k, br, z, r} and
references therein. A recent paper which we extend with the present work is
\cite{ms}.

In this paper we give \emph{annealed} limit theorems for $Y$ in all cases
$\alpha, \beta \in (0,1) \cup (1,2)$, identifying in each case both the scale 
$n^\gamma$ whereby
\begin{equation} \label{rescaledY}
  \left( \frac{Y_{\lfloor nt \rfloor}}{n^\gamma} \,,\, t \in [0,+\infty) \right)
\end{equation}
converges to a non-null limit, and the limit process.
In all cases we prove the optimal, or at least
morally optimal, functional limit theorem, meaning that we show
distributional convergence of the process with respect to 
(w.r.t.)\ the strongest Skorokhod
topology that applies there.
There are cases in which there can be no
convergence in the $J_1$ or $M_1$ topologies: in such cases we prove
convergence w.r.t.\ $J_2$. When the limit process is not \cadlag (or c\`agl\`ad)
we show convergence of the finite-dimensional distributions. Finally, in the cases
where the limit of (\ref{rescaledY}) is deterministic, we prove a functional limit
theorem for the corresponding fluctuations, again relative to the optimal
topology. 

\bigskip

The paper is organized as follows. In Sections \ref{subs-2-1} and 
\ref{subs-2-2} we describe the model and set the notation for the 
$J_1$ and $J_2$ Skorokhod topologies on spaces of 
c\`adl\`ag/c\`agl\`ad functions; 
in Section \ref{section-limit-process} we lay out basic 
limit theorems for the underlying random walk $S$ and the random 
medium $\omega$; in Section \ref{subs-2-4} we present our main results.
Finally, Section 3 contains all the proofs of the main theorems.

\paragraph*{Acknowledgments.} We thank Ward Whitt for discussing with us
the issue of the $J_2$-continuity of the addition map (cf.\ end of Section
\ref{subs-fluct}). This work was partly supported by the
joint UniBo-UniFi-UniPd project \emph{``Stochastic dynamics in disordered 
media and applications in the sciences''}. A.~Bianchi is partially supported by 
the PRIN Grant 20155PAWZB \emph{``Large Scale Random Structures''} 
(MIUR, Italy) and by the BIRD project 198239/19 \emph{``Stochastic processes 
and applications to disordered systems''} (UniPd). M.~Lenci is partially 
supported by the PRIN Grant 2017S35EHN \emph{``Regular and stochastic 
behaviour in dynamical systems''} (MIUR, Italy). E.~Magnanini thanks the 
Department of Mathematics of Universit\`a di
Bologna, to which she was affiliated when most of this work was done.

\section{Model and Results}

\subsection{Setup}
\label{subs-2-1}

As mentioned in the introduction, the L\'evy flight on random medium
that we consider is a random walk performed over the points of a certain
random point process. We proceed to define all the necessary constructions.

\paragraph{Random medium.}
Let $\zeta := \left( \zeta_i, \, i \in \Integers\right)$ be a sequence of i.i.d.\ 
positive random variables. We assume that the law of $\zeta_i$ belongs 
to the normal 
basin of attraction of a $\beta$-stable distribution, with $\beta\in (0,1) \cup (1,2)$. 
In the case $\beta\in(0,1)$, this means that, as $n \to +\infty$,
\begin{equation}
\label{stable}
\frac 1 {n^{1/\beta}} \sum_{i=1}^{n}\zeta_i \dconv Z^{(\beta)}_{1},
\end{equation}
where $Z^{(\beta)}_{1}$ is a stable variable of index $\beta$
and skewness parameter 1 (because $\zeta_i>0$). In the case 
$\beta\in(1,2)$ we have instead
\begin{equation}
\label{stable_center}
\frac 1 {n^{1/\beta}} \sum_{i=1}^{n}(\zeta_i - \avgenvironment) \dconv 
\tilde{Z}^{(\beta)}_{1},
\end{equation}
for a stable variable $\tilde{Z}^{(\beta)}_{1}$ of index 
$\beta$. In this case, necessarily, $\avgenvironment$ is 
the expectation of $\zeta_i$ and the skewness parameter is 0.

The random medium associated to $\left( \zeta_i, i \in \Integers \right)$ is 
defined to be:
\begin{equation} \label{omega-n}
\environment[0]=0, \qquad
\environment[k] = \begin{cases}
\sum_{i=1}^k \zeta_i & \text{ if } k > 0, \\
0 &  \text{ if } k = 0, \\
-\sum_{i=k}^{-1} \zeta_i & \text{ if } k < 0.
\end{cases}
\end{equation}
This determines a point process $\environment[] := \left(\environment[k],
\, k \in \Integers\right)$ on $\Reals$
that we call {\itshape L\'{e}vy random medium} to emphasize the fact 
that the distribution of $\zeta_i$ has a heavy tail. Each point 
$\environment[k]$ will be called a {\itshape target}.
In other words, the distances between neighboring targets are drawn 
according to independent random variables $\zeta_i$.

\paragraph{Underlying random walk.}
We consider a $\Integers$-valued random walk $\RW[] \defeq 
(\RW, \, n \in \Naturals)$, with $\RW[0] = 0$
and i.i.d.\ increments $\xi_i \defeq \RW[i] - \RW[i-1]$
that are independent of $\zeta$ (and thus of $\omega$).
In other words, $S$ is given by
\begin{equation} \label{S-n}
\RW[0] = 0, \qquad \RW = \sum_{i=1}^n \xi_i \quad\text{ for } n \in \Z^+.
\end{equation}
The law of $\xi_i$ belongs to the normal basin of attraction of 
an $\alpha$-stable distribution, with $\alpha \in(0,1)\cup (1,2)$. This means 
that convergences analogous to those given in \eqref{stable} and 
\eqref{stable_center} apply to the $\xi_i$, with limit random variables 
denoted by $W^{(\alpha)}_{1}$ and $\widetilde{W}^{(\alpha)}_{1}$, respectively. 
We will refer to $S$ as the {\itshape underlying} random walk.

\paragraph{Random walk on the random medium.} The {\itshape random walk on
the random medium} $Y := (Y_n,\, n\in\N)$ is defined to be:
\begin{equation} \label{Y-n}
\flight \defeq \environment[\RW], \quad n \in \Naturals.
\end{equation}
In other words, $Y$ performs the same jumps as $S$, but on the points of 
$\environment[]$; see Figure \ref{figura_intro} for a hands-on explanation.
In the following we will focus on the derivation of the asymptotic law of $Y$,
under suitable scaling, with respect to the probability measure 
$\mathbb{P}$ governing the entire system (medium and dynamics).
This is sometimes referred to as the \emph{annealed} or \emph{averaged} law of $Y$.
\begin{figure}[h]
\begin{center}
\includegraphics[width=13cm]{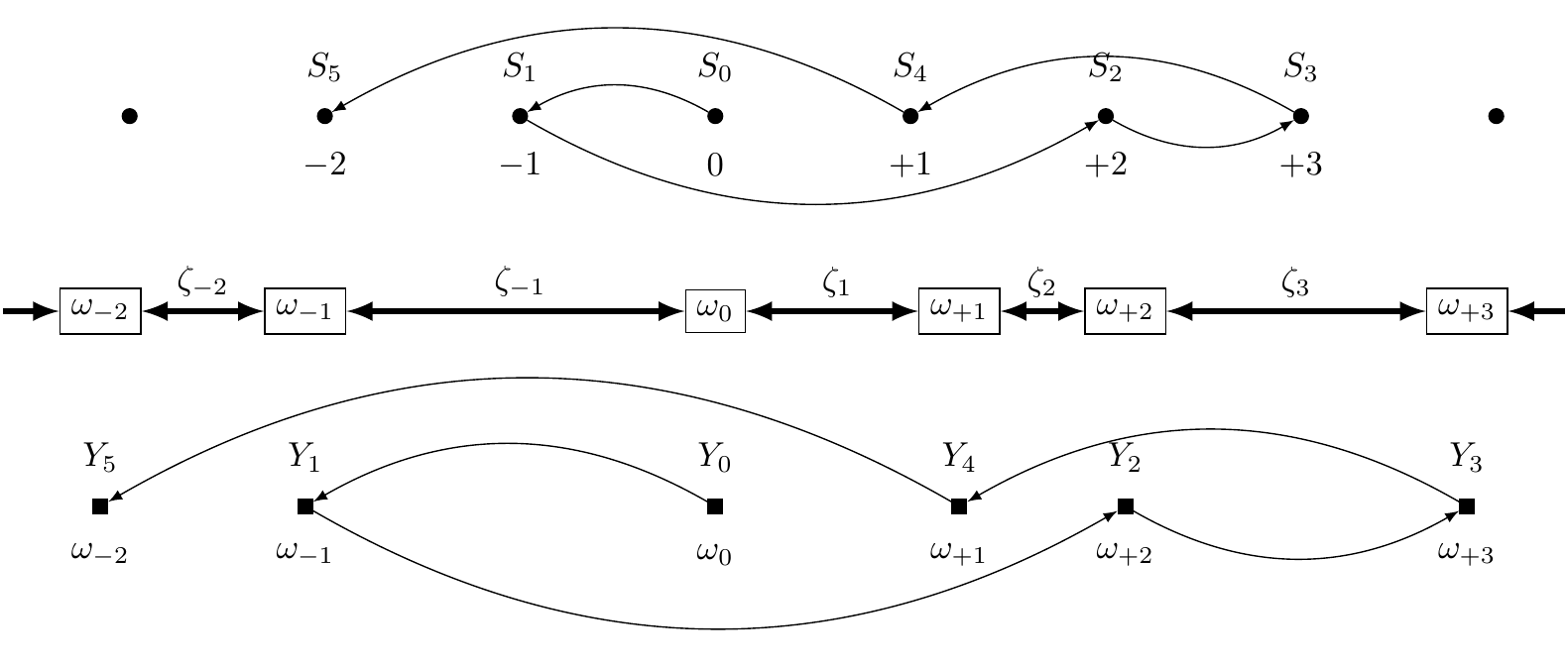}
\caption{Top: A realization of the underlying random walk $\RW[]$ on 
$\Integers$. Middle: A realization of the random medium $\omega$, 
with inter-distances given by $\zeta_i$.  Bottom: The corresponding 
process $Y$ jumps between the targets $\omega$ according to the 
walk $S$.}
\label{figura_intro}
\end{center}
\end{figure}

\bigskip

Before recalling certain basic facts about the processes 
$\omega$ and $\RW[]$, and stating our main results on the process 
$Y$, let us fix the notation for spaces of \cadlag 
functions endowed with certain Skorokhod topologies.

\subsection{C\`adl\`ag functions and Skorokhod topologies}
\label{subs-2-2}

Given $I$, an interval or a half-line contained in $\R^+ := [0, +\infty)$, 
we denote by 
$\cadlag(I) \equiv \cadlag(I ; \Reals)$ the space of all \cadlag functions 
$f: I \into \Reals$, where we recall that these are right-continuous
functions with left limits at all points of their domain. If $I$ is 
an interval or a half-line intersecting $(-\infty,0)$, or $I = \R$,
we consider a less customary function space: $\cadlag(I)$ is the 
space of all functions $f : I \into \R$ such that $s \mapsto f(s)$ is 
\cadlag for $s\ge0$ and $s \mapsto f(-s)$ is \cadlag for $s\ge0$ (in other 
words, the restriction of $f$ to $I \cap (-\infty,0]$ is c\`agl\`ad). Notice that
this implies that $f$ is continuous at 0. We also
use the abbreviations $\cadlag^+\equiv\cadlag(\Reals^+)$ and
$\cadlag \equiv \cadlag(\Reals)$. Lastly, we denote by 
$\cadlag_0$ and $\cadlag^+_0$ the subspaces of nondecreasing 
functions of $\cadlag$ and $\cadlag^+$, respectively.

In this section we introduce two notions of distance/topology that turn 
out to be crucial in the following. A complete treatment of these topologies 
can be found, e.g., in \cite[Sections 3.3. \& 11.5]{Whitt}.

\begin{defn} \label{def-j1-i}
Let $I$ be a bounded interval (which can be closed, open or half-open). For
$f,g \in \cadlag(I)$, denote
\begin{equation} \label{d-j1-i}
d_{J_1, I}(f,g) \defeq \inf_{\lambda: I \to I} \max \left\{ \sup_{t\in I} 
| f \circ \lambda(t) - g(t)| \:,\: \sup_{t\in I} | \lambda(t) - t| \right\},
\end{equation}
where the infimum is taken over all increasing homeomorphisms
$\lambda: I \into I$. This defines a distance on $\cadlag(I)$, which we
refer to as the $J_1$ or $J_1(I)$ distance.
\end{defn}

This metric induces a topology and a notion of limit in $\cadlag(I)$
which can be reformulated as follows: given $( f_n )_{n \in \Naturals}$ and
$f$ in $\cadlag(I)$, the sequence $f_n$ is said to converge to $f$ in the 
$J_1$ topology, and we write
\begin{equation} \label{first-time-conv-j1}
f_n \to f  \qquad \text{in } (\cadlag(I),J_1) ,
\end{equation}
as $n \to \infty$, if there exists a sequence of increasing homeomorphisms 
$\l_n: I \into I$ such that
\begin{align}
\label{eq:J_1_convergence_def_function}
\lim_{n\to\infty} \,&\sup_{t\in I} | f_n\circ\lambda_n(t) - f(t)| = 0, \\
\label{eq:J_1_convergence_def_time_change}
\lim_{n\to\infty} \,&\sup_{t\in I} | \lambda_n(t) - t| = 0.
\end{align}

\begin{defn} \label{def-top-j1-dp}
If $I \subset \R$ is a half-line, say $I= [a, +\infty)$, and 
$( f_n )_{n \in \Naturals}$, $f$ are functions of $\cadlag(I)$, we say that 
$f_n \to f$ in $(\cadlag(I),J_1)$, for $n\to\infty$, if, for all $T>a$ such that $f$ 
is continuous at $T$, 
\begin{equation} \label{marco14}
f_n \to f  \qquad \text{in } (\cadlag([a,T]),J_1).
\end{equation}
The analogous definition is given for $I = (a, +\infty)$ or $I = (-\infty, a]$, etc.
If $I=\R$, we say that $f_n \to f$ in $(\cadlag,J_1)$ if, for all $T>0$ such that 
$f$ is continuous at $T$ and $-T$, 
\begin{equation} \label{marco15}
f_n \to f  \qquad \text{in } (\cadlag([-T,T]),J_1).
\end{equation}
\end{defn}

The above definition defines a $J_1$ topology on $\cadlag(I)$, in all 
cases where $I$ is a half-line or the entire $\R$. It is easy to write a
metric that generates the $J_1(I)$ topology (see \cite[Section 2]{whitt1980some}).
\begin{rem} \label{rmk-cadlag-ified}
Although there are good reasons of convenience for using spaces
of functions that are \cadlag on $\R^+$ and c\`agl\`ad on $\R^-$ 
(see Section \ref{sec-proofs}),
some readers may find this choice odd and prefer to always work with 
\cadlag functions, even for domains $I$ intersecting $(-\infty,0)$. 
Clearly, to any $f \in \cadlag(I)$ as defined earlier there corresponds
a unique \cadlag version
\begin{equation} \label{cadlag-ified}
f_\mathrm{cadlag}(t) \defeq \lim_{s \to t^+} f(s).
\end{equation}
For any bounded $I$, it is easy to see that 
$d_{J_1,I}(f_\mathrm{cadlag}, g_\mathrm{cadlag}) =  d_{J_1,I}(f,g)$.
\end{rem}

\begin{rem} \label{rmk-0in0}
In this paper the only two cases in which we work with $I$ intersecting
$(-\infty,0)$ are $I= [-M,M]$ and $I=\R$. In both cases we only deal with 
functions $f$ such that $f(0)=0$. It is easy to see that,
under such additional condition, it is no loss of generality to require
that the homemorphism $\l$ fixes 0, i.e., $\l(0)=0$. This makes it
clear that, in such cases, $f_n \to f$ in $(\cadlag([-M,M]), J_1)$
if and only if both $f_n \to f$ and $f_n(-\: \cdot) \to f(-\: \cdot)$ in 
$(\cadlag([0,M]), J_1)$. Here $f(-\: \cdot)$ denotes the function
$t \mapsto f(-t)$.
\end{rem}

\medskip

If we think of $f_n$ as describing the spatial motion of some particle, the
function $\l: I \into I$ of \eqref{d-j1-i} is sometimes called the 
\emph{time change}. Requiring the time change to be a homeomorphism 
is occasionally too strong a condition. One has
a weaker topology if they only require that $\l$ be a (possibly
discontinuous) bijection:

\begin{defn} \label{def-j2}
If $I$ is a bounded interval and $f,g \in \cadlag(I)$, the $J_2$ or $J_2(I)$ 
distance $d_{J_2, I}(f,g)$ is defined as in the r.h.s.\ of \eqref{d-j1-i}, but
with the infimum taken over all bijections $\lambda: I \into I$. The notions
of $J_2$-convergence in all cases of $I$ are derived as seen earlier for
$J_1$.
\end{defn}

\begin{rem} \label{rmk-local-unif-conv}
It is a known and easy-to-prove fact that, if $I_0 \subseteq I$ is 
an interval at positive distance from the discontinuities of $f$, and 
$f_n \to f$ in $(\cadlag(I),J_i)$, for either $i=1$ or $i=2$, then 
$\sup_{t \in I_0} | f_n(t) - f(t)| \to 0$.
\end{rem}

\begin{rem} \label{rmk-open-closed}
The definition of limit in $(\cadlag([a, +\infty)),J_i)$ ($i=1,2$) amounts to
checking that $f_n \to f$ in $(\cadlag([a,T]),J_i)$, for all $T>a$ such 
that $f$ is continuous at $T$, see \eqref{marco14}. With the help of the 
previous remark, it is easy to see that this is tantamount to checking that 
$f_n \to f$ in $(\cadlag([a,T)),J_i)$, for all $T>a$ such that
$f$ is continuous at $T$. In the remainder (see for example Section 
\ref{subs3-3}) we will liberally switch between the two conditions, as is 
more convenient.
\end{rem}

\subsection{Limit processes for $\environment[]$ and 
\texorpdfstring{$\RW[]$}{the underlying walk and the environment}} 
\label{section-limit-process}

We now recall some elementary functional limit theorems for
suitable rescalings of the processes $\omega$ and $S$, cf.\ 
\eqref{omega-n} and \eqref{S-n}.

By definition, for all $k\in \Integers$, $\environment[k]$ is a sum of 
$|k|$ i.i.d.\ random variables $\zeta_i$ in the normal domain of 
attraction of a $\beta$-stable distribution. We first deal with the case 
$\beta \in (0,1)$.
For every $s \in \Reals$ we define
\begin{equation} \label{hat-omega}
\diffusionscaledenvironmentnomean(s) \defeq
\begin{dcases}
\frac{ \environment[\lfloor n s \rfloor]}{n^{1/\beta}} 
& \text{ if } s \geq 0, \\[4pt]
\frac{ \environment[\lceil n  s \rceil]}{n^{1/\beta}} & \text{ if } s < 0.
\end{dcases}
\end{equation}
Let $(Z^{(\beta)}_{\pm}(s), \, s \ge 0)$ be two i.i.d.\ \cadlag 
L\'evy $\beta$-stable processes such that $Z^{(\beta)}_{\pm}(0)=0$ 
and $Z^{(\beta)}_{\pm}(1)$ is distributed like
$Z^{(\beta)}_{1}$, as introduced in \eqref{stable} (these two
conditions uniquely determine the common distribution of the processes). 
Set
\begin{equation} \label{Z}
\diffusionlimitenvironment(s) \defeq
\begin{dcases}
Z^{(\beta)}_{+}(s) & \mbox{ if } s \geq 0, \\
-Z^{(\beta)}_{-}(-s) & \mbox{ if } s < 0.
\end{dcases}
\end{equation}
Then (see, e.g., \cite[Section 4.5.3]{Whitt}), as $n \to \infty$,

\begin{equation} \label{convergence-hat-omega} 
\diffusionscaledenvironmentnomean \dconv \diffusionlimitenvironment \inDJ{1}.
\end{equation}

When $\beta \in (1,2)$, the average distance $\avgenvironment := 
\E{\zeta_i}$ between successive targets is finite and positive by assumption. 
So, at first order, a Strong Law of Large Numbers holds. More in detail,
setting
\begin{equation} \label{bar-omega}
\fluidscaledenvironment (s) \defeq
\begin{dcases}
\frac{\environment[\lfloor n  s \rfloor]}{n} & \text{ if } s \geq 0, \\[4pt]
\frac{\environment[\lceil n  s \rceil]}{n} & \text{ if } s < 0,
\end{dcases}
\end{equation}
we have
\begin{equation} \label{convergence-bar-omega}
\fluidscaledenvironment \asconv \avgenvironment \tinyspace\id \inDJ{1},
\end{equation}
as $n\to\infty$. Here and in the rest of the paper $\id$ denotes the identity 
function, on whatever domain it is defined. Furthermore, a functional 
convergence similar to 
\eqref{convergence-hat-omega} holds for the fluctuations around this
Law of Large Numbers. More explicitly, for $s\in\R$, define
\begin{equation}\label{Omegatilde}
\diffusionscaledenvironment(s) \defeq
\begin{dcases}
\frac{\sum_{i=1}^{\lfloor ns \rfloor}(\zeta_{i} - \avgenvironment)}{n^{1/\beta}} 
& \text{ if } s \geq 0,\\[4pt]
\frac{-\sum_{i= \lceil ns \rceil}^{-1}(\zeta_{i} - \avgenvironment)}{n^{1/\beta}} 
& \text{ if } s < 0.
\end{dcases}
\end{equation}
Then, as $n\to\infty$,
\begin{equation} \label{convergence-tilde-omega}
\diffusionscaledenvironment \dconv \tilde{Z}^{(\beta)} \inDJ{1},
\end{equation}
where the process $\tilde{Z}^{(\beta)}$ is defined similarly to $Z^{(\beta)}$, 
cf.\ \eqref{Z},
but with $\tilde{Z}^{(\beta)}_{\pm}(1)$ distributed like $\tilde{Z}^{(\beta)}_{1}$, 
introduced in \eqref{stable_center}.

Analogous limit theorems hold for the continuous-time rescaled versions 
of the underlying random walk $\RW[]$. By definition,
$\RW$ is a sum of $n$ i.i.d.\ random variables $\xi_i$ in 
the normal domain of attraction of an $\alpha$-stable distribution.
We distinguish two regimes, depending on the values of $\alpha$
and $\avgRW:=\E{\xi_i}$ (when applicable). 

The first regime corresponds to the cases $\alpha \in (0,1)$, or 
$\alpha \in (1,2)$ with $\avgRW = 0$. In these situations, the drift
of the underlying random walk is either undefined or null. In either
case, it does not affect the convergence of the process
\begin{equation} \label{hat-S}
\diffusionscaledRWnomean (t) \defeq  \frac{ \RW[\lfloor n t \rfloor]}{n^{1/\alpha}} ,
\end{equation}
which we define for $t \ge 0$. In fact, let $\diffusionlimitRW$ denote a L\'evy 
$\alpha$-stable process with $\diffusionlimitRW(0)=0$ and 
$\diffusionlimitRW(1)$ distributed like $W^{(\alpha)}_{1}$ (the latter
variable has been defined after \eqref{S-n}). Then, as $n \to \infty$,
\begin{equation} \label{convergence-hat-S}
\diffusionscaledRWnomean \dconv \diffusionlimitRW \inDPJ{1}.
\end{equation}

When $\alpha \in (1,2)$ and $\mu \ne 0$, set, for $t \ge 0$,
\begin{equation} \label{bar-S}
\fluidscaledRW (t) \defeq \frac{ \RW[\lfloor n  t \rfloor]}n.
\end{equation}
By the functional version of the Strong Law of Large Numbers,
\begin{equation} \label{convergence-bar-S}
\fluidscaledRW \asconv \avgRW \tinyspace\id \inDPJ{1} ,
\end{equation}
as $n\to \infty$. As for the fluctuations, defining
\begin{equation}\label{Stilde}
\diffusionscaledRW(t)\defeq \frac{\sum_{i=1}^{\lfloor nt \rfloor}
(\xi_{i}-\avgRW)}{n^{1/\alpha}} ,
\end{equation}
we get
\begin{equation} \label{convergence-tildeS} 
\diffusionscaledRW \dconv \widetilde{W}^{(\alpha)} \inDPJ{1}.
\end{equation}
where $\widetilde{W}^{(\alpha)}$ is a L\'evy $\alpha$-stable process 
with $\widetilde{W}^{(\alpha)}(0) = 0$ and
$\widetilde{W}^{(\alpha)}(1)$ distributed like $\widetilde{W}^{(\alpha)}_{1}$
(again defined after \eqref{S-n}).

\subsection{Results}
\label{subs-2-4}

We now present our convergence results for the L\'{e}vy flight $\flight[]$
which, as we shall see, strongly depend on the values of $\alpha$ and $\beta$.
All theorems are stated using the
notation established in the previous section.

We first analyze the case $\beta \in (0,1)$, corresponding to an infinite
expected distance between the targets of the random medium 
\begin{thm} \label{th-1}
Let $\beta \in (0,1)$ and assume that either $\alpha \in (0,1)$ or 
$\alpha \in (1,2)$ with $\avgRW=0$. For $t \in \R^+$ define
\begin{equation} \label{Y-omegahat-Shat}
\diffusionscaledflightnomean(t)
\defeq
\diffusionscaledenvironmentnomean \circ \diffusionscaledRWnomean(t)
=
\frac{Y_{\lfloor n  t \rfloor}}{n^{1/\alpha \beta}},
\end{equation}
where $\diffusionscaledenvironmentnomean$ and 
$\diffusionscaledRWnomean$ have been introduced, respectively,
in \eqref{hat-omega} and \eqref{hat-S}.
Then the finite-dimensional distributions of $\diffusionscaledflightnomean$ 
converge to those of $\diffusionlimitenvironment \circ \diffusionlimitRW$, 
i.e., for any $m \in \Z^+$ and $t_1,\ldots,t_m \in \Reals^+$,
\begin{equation} \label{eq1-th-1}
\left( \diffusionscaledflightnomean (t_1) , \ldots , 
\diffusionscaledflightnomean (t_m) \right)
\dconv
\left( \diffusionlimitenvironment(\diffusionlimitRW (t_1)) , \ldots , \diffusionlimitenvironment(\diffusionlimitRW (t_m)) \right),
\end{equation}
as $n \rightarrow \infty$.
\end{thm}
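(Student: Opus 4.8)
The plan is to exploit the fact that the rescaled flight factorizes as a composition $\diffusionscaledflightnomean = \diffusionscaledenvironmentnomean[n^{1/\alpha}] \circ \diffusionscaledRWnomean$ of two \emph{independent} rescaled processes, and to push this structure through to the limit. First I would verify the identity in \eqref{Y-omegahat-Shat} at the level of the floors: since $\diffusionscaledRWnomean(t) = S_{\lfloor nt\rfloor}/n^{1/\alpha}$ and $S_{\lfloor nt\rfloor} \in \Z$, evaluating $\diffusionscaledenvironmentnomean[n^{1/\alpha}]$ at this point produces $\omega_{S_{\lfloor nt\rfloor}}/n^{1/(\alpha\beta)} = Y_{\lfloor nt\rfloor}/n^{1/(\alpha\beta)}$ (the floor for $s\ge0$ and the ceiling for $s<0$ being exactly how the two-sided medium is indexed). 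Writing $A_n := \diffusionscaledenvironmentnomean[n^{1/\alpha}]$ and $B_n := \diffusionscaledRWnomean$, the goal becomes the convergence of the finite-dimensional distributions of $A_n\circ B_n$ to those of $\diffusionlimitenvironment\circ\diffusionlimitRW$. One should not expect a functional statement here, since $\diffusionlimitenvironment\circ\diffusionlimitRW$ is in general not \cadlag (a jump of $\diffusionlimitRW$ makes the composition skip an interval over which $\diffusionlimitenvironment$ oscillates), which is precisely why only convergence of the finite-dimensional distributions is claimed.

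The first ingredient is joint convergence. By \eqref{convergence-hat-omega}, applied along the divergent real sequence $n^{1/\alpha}\to\infty$, we have $A_n \dconv \diffusionlimitenvironment$ in $(\cadlag, J_1)$, while \eqref{convergence-hat-S} gives $B_n \dconv \diffusionlimitRW$ in $(\cadlag^+, J_1)$. Since $\omega$ and $S$ are independent, $A_n$ and $B_n$ are independent for every $n$, whence the pair converges jointly, $(A_n, B_n) \dconv (\diffusionlimitenvironment, \diffusionlimitRW)$, in the product topology, with the two limit processes \emph{independent}. I would then move to a common probability space via the Skorokhod representation theorem (both factor spaces being Polish), obtaining versions $\tilde A_n \to \diffusionlimitenvironment$ and $\tilde B_n \to \diffusionlimitRW$ in $J_1$ almost surely, with the limit pair $(\diffusionlimitenvironment, \diffusionlimitRW)$ still independent.

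The heart of the argument is a pointwise continuity property of evaluation under $J_1$: if $g_n \to g$ in $(\cadlag, J_1)$, $x_n \to x$, and $g$ is continuous at $x$, then $g_n(x_n) \to g(x)$. This follows by writing $g_n(x_n) = (g_n\circ\lambda_n)(\lambda_n^{-1}(x_n))$ for the time changes $\lambda_n \to \id$, observing that $\lambda_n^{-1}(x_n)\to x$, and combining the uniform convergence $g_n\circ\lambda_n \to g$ with the continuity of $g$ at $x$. To apply this with $g_n = \tilde A_n$, $g = \diffusionlimitenvironment$, $x_n = \tilde B_n(t_j)$ and $x = \diffusionlimitRW(t_j)$, I need two almost-sure facts: that $\diffusionlimitRW$ is continuous at the fixed time $t_j$ (true for any L\'evy process at a deterministic time), so that $\tilde B_n(t_j)\to\diffusionlimitRW(t_j)$; and that $\diffusionlimitenvironment$ is continuous at the random point $\diffusionlimitRW(t_j)$. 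This last point is where independence and the stable structure are essential: conditionally on $\diffusionlimitenvironment$ its discontinuity set is countable, while $\diffusionlimitRW(t_j)$ has an absolutely continuous (stable) law and is independent of $\diffusionlimitenvironment$, so by Fubini it avoids that set almost surely. Granting these, the evaluation lemma yields $\tilde A_n(\tilde B_n(t_j)) \to \diffusionlimitenvironment(\diffusionlimitRW(t_j))$ almost surely, simultaneously for $j=1,\dots,m$; transferring back through the Skorokhod coupling gives the claimed convergence \eqref{eq1-th-1}.

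The step I expect to be the main obstacle is precisely this last continuity issue, namely the failure of the composition map to be continuous on the Skorokhod space. The whole difficulty is concentrated in guaranteeing that the inner limit $\diffusionlimitRW(t_j)$ almost surely does not hit the random, countable jump locations of the outer limit $\diffusionlimitenvironment$; it is the independence of medium and walk, together with the absolute continuity of the one-dimensional stable marginals, that makes the composition map continuous at the relevant limiting configuration and lets the argument close. The same reasoning applies verbatim with $J_1$ replaced by $J_2$.
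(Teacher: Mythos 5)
Your proposal is correct and follows essentially the same route as the paper: joint convergence of the independent pair (the paper's Lemma \ref{lemma31}), a passage to almost sure convergence via the Skorokhod Representation Theorem, a deterministic evaluation lemma at continuity points (your time-change argument is an abstract reformulation of the paper's Lemma \ref{lemma33}), and an independence/Fubini argument ensuring $Z^{(\beta)}$ is a.s.\ continuous at $W^{(\alpha)}(t_j)$. The only differences are cosmetic: you condition on $Z^{(\beta)}$ and use absolute continuity of the stable marginal of $W^{(\alpha)}(t_j)$, whereas the paper conditions on the walk and uses that a L\'evy process is a.s.\ continuous at each fixed point (which also covers $t_j=0$ without a separate remark), and you make explicit the rescaling index $n^{1/\alpha}$ in the outer process that the paper leaves implicit in \eqref{Y-omegahat-Shat}.
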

Theorem \ref{th-1} is rather weak, in that it only proves convergence of 
the finite-dimensional distributions of the process 
$\diffusionscaledflightnomean$ defined in \eqref{Y-omegahat-Shat}.
Observe, however, that the limit process 
$Z^{(\beta)}\circ\,W^{(\alpha)}$ has trajectories that are not \cadlag with 
positive probability (see for example the explanation around (2.9)
of \cite{blp}). Therefore, a functional limit theorem 
w.r.t.\ a Skorokhod topology is not the natural result to expect. On the other
hand, when $\alpha\in(1,2)$ and $\mu\neq 0$, the assertion can be 
strengthened as follows.
\begin{thm} \label{th-conv-j2}
Let $\beta\in(0,1)$ and $\alpha\in(1,2)$ with $\mu \neq 0$. For $t \in \R^+$ 
define
\begin{equation} \label{Y-omegahat-Sbar}
\diffusionscaledflightnomean(t)
\defeq
\diffusionscaledenvironmentnomean \circ \fluidscaledRW(t)
=
\frac{\flight[\lfloor n  t \rfloor]}{n^{1/ \beta}} ,
\end{equation}
cf.\ \eqref{hat-omega} and \eqref{bar-S}. Then, as $n\to\infty$,
\begin{equation} \label{eq-conv-j2}
\diffusionscaledflightnomean \dconv \mathrm{sgn}(\avgRW) \,
|\avgRW|^{1/\beta} \, \diffusionlimitenvironment_+ \inDPJ{2}.
\end{equation}
\end{thm}
\begin{rem} 
Since $\diffusionlimitenvironment_+ \stackrel{\mathrm{d}}{=} 
\diffusionlimitenvironment_-$, one could put either process in the r.h.s.\ of
\eqref{eq-conv-j2}, irrespectively of the sign of $\avgRW$.
\end{rem}


\begin{rem} \label{first-rem-j2}
The convergence \eqref{eq-conv-j2} fails in the topology $J_1$, or 
even $M_1$ \cite[Section 3.3]{Whitt}. The topology $J_2$ 
is thus the strongest among the classical \Skorochod\ topologies with 
respect to which the convergence holds. To justify the claim, observe
that, in general, $\fluidscaledRW$ is a wildly oscillating function around 
$\avgRW \tinyspace \id$, and $\diffusionlimitenvironment$ is almost surely 
discontinuous. More in detail, assume that $\avgRW > 0$ and let 
$s \in \Reals$ be a discontinuity 
point of $\diffusionlimitenvironment_+$ with a jump, say, of order 1 in $n$.
Since, for $n \to \infty$, $\diffusionscaledenvironmentnomean$ is very 
close to $\diffusionlimitenvironment_+$ in $J_1$, there exists a discontinuity
point $s_n$ of $\diffusionscaledenvironmentnomean$, very close to $s$, 
with a jump of order 1. Now, if we exclude the case where the underlying 
random walk $S$ is deterministic, $\fluidscaledRW (t)$ is a non-monotonic 
function of $t \in I$, for every interval $I \subset \R^+$ and $n$ large
enough, depending on $I$ (this is an elementary Brownian-bridge
result). So one can find a small interval $I$ such that, as $t$ runs through $I$, 
$\fluidscaledRW(t)$ oscillates many times around  $s_n$. Therefore 
$\diffusionscaledenvironmentnomean \circ \fluidscaledRW (t)$ 
has many back-and-forth jumps of order 1. This prevents convergence
both in $J_1$ and in $M_1$, cf.\ \cite[Figure 11.2]{Whitt}.
What allows for $J_2$-convergence is that the fluctuations of 
$\fluidscaledRW$ around $\avgRW \tinyspace \id$ vanish, as $n \to \infty$. 
This means that the oscillations of $\fluidscaledRW(t)$ around $s_n$,
and therefore the large oscillations of 
$\diffusionscaledenvironmentnomean \circ \fluidscaledRW (t)$,
occur only in a vanishing interval $I_n \subset I$. Therefore 
one can find a non-continuous change of the coordinate $t$, 
say $\rho_n : [0,T) \into [0,T)$, which is globally 
close to the identity and ``reorders'' the points in $I_n$ 
in the sense that $\diffusionscaledenvironmentnomean \circ 
\fluidscaledRW \circ \rho_n$ only has one jump of order 1. The problem thus 
reduces to the much easier problem of showing the $J_1$-convergence
of the latter process. See the proof of Theorem \ref{th-conv-j2} for the 
rigorous arguments. 
Lastly, we observe that all the results presented in
this paper involving the $J_2$ topology could in fact be stated for a 
stronger \Skorochod-type topology. We refer the interested reader to 
Remark \ref{rmk-j32} of the Appendix.
\end{rem}

Next we consider the case $\beta \in (1,2)$, where the inter-distances 
of the random medium have finite mean.

\begin{thm} \label{th-2}
Let $\beta \in (1,2)$ and recall the notation \eqref{bar-omega}, \eqref{hat-S} 
and \eqref{bar-S}.
\begin{enumerate}
\item Assume $\alpha \in (0,1)$, or $\alpha \in (1,2)$ with $\mu=0$. For
$t \in \R^+$ set
\begin{equation} \label{Y-omegabar-Shat}
\diffusionscaledflightnomean(t) \defeq \fluidscaledenvironment \circ \diffusionscaledRWnomean(t) =
\frac{\flight[\lfloor n  t \rfloor]}{n^{1/\alpha}}.
\end{equation}
Then, as $n\to\infty$,
\begin{equation} \label{eq-th-2-a}
\diffusionscaledflightnomean   \dconv  \avgenvironment \tinyspace 
\diffusionlimitRW \inDPJ{1}.
\end{equation}

\item Assume $\alpha \in (1,2)$ and $\avgRW \ne 0$. Setting
\begin{equation} \label{Y-bar}
\fluidscaledflight(t)
\defeq
\fluidscaledenvironment \circ \fluidscaledRW(t)
=
\frac{\flight[\lfloor n  t \rfloor]}{n} 
\end{equation}
one has
\begin{equation} \label{eq-th-2-b}
\fluidscaledflight \dconv \avgenvironment \avgRW \tinyspace \id \inDPJ{1}.
\end{equation}
\end{enumerate}
\end{thm}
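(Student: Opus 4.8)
The plan is to read both statements as continuity of the composition map on Skorokhod space, exploiting that in each case the outer process is the fluid-rescaled environment, which converges to the \emph{continuous} deterministic limit $\avgenvironment\tinyspace\id$. By Remark \ref{rmk-local-unif-conv}, $J_1$-convergence to a continuous function coincides with uniform convergence on compact subintervals, and this upgrade is what lets the inputs \eqref{convergence-bar-omega} and \eqref{convergence-hat-S} (resp.\ \eqref{convergence-bar-omega} and \eqref{convergence-bar-S}) combine cleanly. I read the compositions as in the displayed identities of the statement: after rescaling, the inner walk samples the medium exactly at the integer sites it visits, so $\fluidscaledenvironment\circ\diffusionscaledRWnomean$ telescopes to $Y_{\lfloor nt\rfloor}/n^{1/\alpha}$; since \eqref{convergence-bar-omega} is a law of large numbers, it persists along the diverging scale at which the environment is sampled here, so the outer factor still tends to $\avgenvironment\tinyspace\id$ locally uniformly.

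For part (1), I would first record the joint convergence of $(\fluidscaledenvironment,\diffusionscaledRWnomean)$ to $(\avgenvironment\tinyspace\id,\diffusionlimitRW)$ in $(\cadlag,J_1)\times(\cadlag^+,J_1)$. This is automatic: the environment and the walk are independent and the limit in \eqref{convergence-bar-omega} is deterministic, so the two marginal convergences force joint convergence to the product law. By Skorokhod representation I pass to a common probability space on which both convergences hold almost surely with unchanged joint law, and fix a typical realization; since $\avgenvironment\tinyspace\id$ is continuous, Remark \ref{rmk-local-unif-conv} gives $\sup_{|x|\le R}|\fluidscaledenvironment(x)-\avgenvironment x|\to0$ for every $R>0$ (the two-sided range being covered because the walk takes negative values). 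The core estimate is then a single time change: fixing $T>0$ at which $\diffusionlimitRW$ is continuous, I choose, from the inner $J_1$-convergence on $[0,T]$, increasing homeomorphisms $\lambda_n:[0,T]\to[0,T]$ with $\sup_{t\le T}|\diffusionscaledRWnomean(\lambda_n(t))-\diffusionlimitRW(t)|\to0$ and $\sup_{t\le T}|\lambda_n(t)-t|\to0$, and decompose
\begin{align*}
\fluidscaledenvironment\bigl(\diffusionscaledRWnomean(\lambda_n(t))\bigr)-\avgenvironment\,\diffusionlimitRW(t)
={}&\bigl[\fluidscaledenvironment\bigl(\diffusionscaledRWnomean(\lambda_n(t))\bigr)-\avgenvironment\,\diffusionscaledRWnomean(\lambda_n(t))\bigr]\\
&{}+\avgenvironment\bigl[\diffusionscaledRWnomean(\lambda_n(t))-\diffusionlimitRW(t)\bigr].
\end{align*}
The second bracket vanishes uniformly on $[0,T]$ by the choice of $\lambda_n$; for the first, $\diffusionscaledRWnomean(\lambda_n(\cdot))$ converges uniformly on $[0,T]$ to the bounded function $\diffusionlimitRW$, so its values lie in a fixed $[-R,R]$ for large $n$, and the local uniform convergence of $\fluidscaledenvironment$ drives the first bracket to $0$ as well. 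Using $\lambda_n$ as time change yields $J_1$-convergence on $[0,T]$, and running over all continuity points $T$ as in Definition \ref{def-top-j1-dp} gives $\diffusionscaledflightnomean\to\avgenvironment\diffusionlimitRW$ almost surely in $(\cadlag^+,J_1)$, whence \eqref{eq-th-2-a}.

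Part (2) is the degenerate version of the same computation: both limits are deterministic, so no Skorokhod representation is needed and the telescoping already holds at the common scale $n$. Here $\fluidscaledenvironment\to\avgenvironment\tinyspace\id$ and $\fluidscaledRW\to\avgRW\tinyspace\id$ hold almost surely to continuous limits, hence locally uniformly by Remark \ref{rmk-local-unif-conv}; both brackets in the decomposition
\[
\fluidscaledenvironment\bigl(\fluidscaledRW(t)\bigr)-\avgenvironment\avgRW\tinyspace t=\bigl[\fluidscaledenvironment\bigl(\fluidscaledRW(t)\bigr)-\avgenvironment\,\fluidscaledRW(t)\bigr]+\avgenvironment\bigl[\fluidscaledRW(t)-\avgRW\tinyspace t\bigr]
\]
then vanish uniformly on each $[0,T]$: the second by \eqref{convergence-bar-S}, the first because $\fluidscaledRW(t)$ stays in a fixed compact for $t\le T$ while $\fluidscaledenvironment\to\avgenvironment\tinyspace\id$ uniformly there. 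Hence $\fluidscaledflight\to\avgenvironment\avgRW\tinyspace\id$ uniformly on compacts almost surely, which is stronger than, and so implies, the $J_1$-convergence \eqref{eq-th-2-b}.

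The hard part is confined to part (1): one must evaluate the local uniform convergence of the outer map $\fluidscaledenvironment$ along the random, wildly fluctuating argument $\diffusionscaledRWnomean$, whose own limit $\diffusionlimitRW$ is genuinely discontinuous. The time-change device is precisely what reconciles the jumps of the inner process with the smoothness of the outer limit, and it works only because the sampled argument remains in a compact set and $\fluidscaledenvironment$ converges uniformly there. I expect the only genuine bookkeeping to be the dependence of the compact $[-R,R]$ on the realization and on $T$, together with the routine passage, via Remark \ref{rmk-local-unif-conv} and Definition \ref{def-top-j1-dp}, from convergence on each $[0,T]$ to convergence on the half-line.
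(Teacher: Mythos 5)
Your proof is correct, and its core estimate is the same as the paper's, but it is packaged differently. The paper isolates a reusable lemma (Lemma \ref{lem-marco}): the composition map $h(w,s)=w\circ s$ on $\cadlag_0\times\cadlag^+$ is $J_1$-continuous whenever the outer function $w$ is \emph{continuous}, proved by exactly your two-term decomposition (with $\sup_{u\in[-M,M]}|w_n(u)-w(u)|$ controlling the first term and uniform continuity of $w$ on $[-M,M]$ controlling the second); it then deduces both assertions in one stroke from the joint convergence $(\fluidscaledenvironment,\diffusionscaledRWnomean)\dconv(\avgenvironment\tinyspace\id,\diffusionlimitRW)$ via Billingsley's extended Continuous Mapping Theorem, using that the limit pair almost surely avoids $\mathrm{Disc}(h)$ --- no Skorokhod representation appears in this proof at all. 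You instead invoke the Skorokhod Representation Theorem and run the estimate pathwise, and you exploit the linearity of the limit $\avgenvironment\tinyspace\id$ (so that the ``uniform continuity of $w$'' step collapses to pulling out the constant $\avgenvironment$) rather than proving continuity of $h$ at a general continuous outer function. What the paper's route buys is economy and reusability: the lemma covers any continuous nondecreasing outer limit, handles assertion \emph{2} ``with no additional effort,'' and sidesteps the representation-theorem machinery; what your route buys is a self-contained, elementary argument at the point of use, and your observation that part \emph{2} needs no representation at all (both inputs \eqref{convergence-bar-omega} and \eqref{convergence-bar-S} being already almost sure) is a genuine simplification there. Two small points of care, both handled at the same level of rigor as the paper: your justification of joint convergence is doubly sufficient (either independence, as in Lemma \ref{lemma31} via \cite[Theorem 11.4.4]{Whitt}, or the determinism of one limit alone would do), and the ``telescoping'' identity in \eqref{Y-omegabar-Shat} implicitly matches the medium's scaling index to $n^{1/\alpha}$, an abuse of notation the paper also makes silently.
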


As stated in point 2 above, when $\alpha \in (1,2)$ and $\avgRW \neq 0$, 
the sequence of processes $\fluidscaledflight$ converges to a multiple 
of the identity function.
The next theorem gives the explicit asymptotics of the fluctuations of 
$\fluidscaledflight$ around its deterministic limit.
\begin{thm} \label{th-3}
Let $\alpha, \beta \in (1,2)$ with $\avgRW \ne 0$, and let 
$\fluidscaledflight$ be the process defined in \eqref{Y-bar}.

\begin{enumerate}

\item If $\alpha<\beta$ define
\begin{align}\label{Ytilde1}
\diffusionscaledflight(t) \defeq\frac{ n(\fluidscaledflight(t) - 
\avgenvironment \avgRW \tinyspace t)} {n^{1/\alpha}}.
\end{align}
Then, when $n \to \infty$, 
%
\begin{equation} \label{eq:fCLT_alpha<beta}
\diffusionscaledflight
\dconv
\avgenvironment  \widetilde{W}^{(\alpha)}  \inDPJ{1},
\end{equation}
where $\widetilde{W}^{(\alpha)}$ has been defined after 
\eqref{convergence-tildeS}. 

\item If $\alpha > \beta$ define
\begin{align}\label{Ytilde2}
\diffusionscaledflight(t)\defeq\frac{n(\fluidscaledflight(t) - 
\avgenvironment \avgRW \tinyspace t)}{n^{1/\beta}}.
\end{align}
Then, when $n \to \infty$, 
%
\begin{equation} \label{eq:fCLT_alpha>beta}
\diffusionscaledflight
\dconv
\mathrm{sgn}(\avgRW) \, |\avgRW|^{1/\beta} \, \tilde{Z}^{(\beta)}_{+} \inDPJ{2},
\end{equation}
where $\tilde{Z}^{(\beta)}_{+}$ has been defined after 
\eqref{convergence-tilde-omega}.
\item If $\alpha = \beta$ define
\begin{align}\label{Ytilde3}
\diffusionscaledflight(t)\defeq\frac{ n(\fluidscaledflight(t)  - \avgenvironment \avgRW \tinyspace t)} {n^{1/\alpha}}.
\end{align}
Let $\tilde{Z}^{(\alpha)}_{+}$ and $\widetilde{W}^{(\alpha)}$ be two 
independent $\alpha$-stable processes, as previously defined. As
$n\to\infty$,
\begin{equation}\label{eq:fCLT_alpha_equals_beta}
\diffusionscaledflight
\dconv
\mathrm{sgn}(\avgRW) \, |\avgRW|^{1/\beta} \, \tilde{Z}^{(\alpha)}_{+} + 
\avgenvironment \widetilde{W}^{(\alpha)} \inDPJ{2}.
\end{equation}
\end{enumerate}
\end{thm}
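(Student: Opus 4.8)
The plan is to isolate, inside the unnormalized fluctuation $n\bigl(\fluidscaledflight(t) - \nu\mu t\bigr) = \omega_{S_{\lfloor nt \rfloor}} - \nu\mu n t$, the contribution of the random medium from that of the underlying walk. Adding and subtracting $\nu S_{\lfloor nt \rfloor}$ and using the elementary identity $\omega_k - \nu k = n^{1/\beta}\,\diffusionscaledenvironment(k/n)$, valid for every integer $k$ (for both signs, directly from the definition \eqref{Omegatilde} of $\diffusionscaledenvironment$), one obtains the exact decomposition
\[
  n\bigl(\fluidscaledflight(t) - \nu\mu t\bigr)
  = n^{1/\beta}\, \diffusionscaledenvironment \circ \fluidscaledRW(t)
  + \nu\, n^{1/\alpha}\, \diffusionscaledRW(t)
  + \nu\mu\,(\lfloor nt \rfloor - nt).
\]
The last summand is bounded, hence negligible after dividing by $n^{1/\gamma}$ with $\gamma \in \{\alpha,\beta\}$. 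Everything thus reduces to the two processes $\diffusionscaledenvironment \circ \fluidscaledRW$ (of order $n^{1/\beta}$) and $\nu\,\diffusionscaledRW$ (of order $n^{1/\alpha}$). The inputs are \eqref{convergence-tilde-omega}, \eqref{convergence-bar-S} and \eqref{convergence-tildeS}; since $\diffusionscaledenvironment$ is a functional of $\zeta$ while $(\fluidscaledRW, \diffusionscaledRW)$ is a functional of the independent sequence $\xi$, these convergences hold jointly, with independent limits $(\diffusionscaledenvironment, \fluidscaledRW, \diffusionscaledRW) \dconv (\tilde{Z}^{(\beta)}, \mu\,\id, \widetilde{W}^{(\alpha)})$, where $\tilde{Z}^{(\beta)} \perp \widetilde{W}^{(\alpha)}$.

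For the medium term I would argue by composition. The inner process $\fluidscaledRW$ converges to the continuous, strictly monotone limit $\mu\,\id$ (here $\mu \ne 0$), and the function space is \cadlag on $\R^+$ and c\`agl\`ad on $\R^-$ exactly so that composing a monotone inner map with such a function yields a \cadlag output (relevant for $\mu<0$, which pushes the argument into $\R^-$). As in the proof of Theorem \ref{th-conv-j2}, this gives $\diffusionscaledenvironment \circ \fluidscaledRW \dconv \tilde{Z}^{(\beta)}(\mu\,\cdot)$, and by self-similarity of the stable process together with $\tilde{Z}^{(\beta)}_{-} \stackrel{\mathrm{d}}{=} \tilde{Z}^{(\beta)}_{+}$ one identifies $\tilde{Z}^{(\beta)}(\mu\,\cdot) = \mathrm{sgn}(\mu)\,|\mu|^{1/\beta}\,\tilde{Z}^{(\beta)}_{+}$ for both signs of $\mu$. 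As explained in Remark \ref{first-rem-j2}, the non-monotone oscillations of $\fluidscaledRW$ around $\mu\,\id$ make this convergence fail in $J_1$ and $M_1$ but hold in $J_2$: a time change $\rho_n \to \id$ reorders, inside a vanishing window around each discontinuity, the back-and-forth crossings so that the composed process has a single clean jump there, reducing the matter to a $J_1$ statement. This is the same mechanism as in Theorem \ref{th-conv-j2}, with the centered fluctuation $\diffusionscaledenvironment$ in place of the uncentered $\diffusionscaledenvironmentnomean$ (the topological argument is insensitive to the centering).

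Cases 1 and 2 then follow quickly. In case 1 ($\alpha < \beta$) I normalize by $n^{1/\alpha}$: the medium term carries the prefactor $n^{1/\beta - 1/\alpha} \to 0$ and is killed, while $\diffusionscaledRW \dconv \widetilde{W}^{(\alpha)}$ in $J_1$ survives. Since the vanishing medium contribution tends to $0$ uniformly on compacts (its $J_2$-tight sup-norm is multiplied by a null prefactor), $\diffusionscaledflight \dconv \nu\,\widetilde{W}^{(\alpha)}$ in $J_1$: adding a uniformly vanishing term preserves $J_1$-convergence, because the sup-norm is unaffected by the time change realizing $\nu\,\diffusionscaledRW \dconv \nu\,\widetilde{W}^{(\alpha)}$. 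Case 2 ($\alpha > \beta$) is the composition statement above, normalized by $n^{1/\beta}$, with the walk term now killed by the prefactor $n^{1/\alpha - 1/\beta} \to 0$.

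The genuinely new case is case 3 ($\alpha = \beta$), where both terms survive at the common scale $n^{1/\alpha}$ and one must add the $J_2$-limit $U := \mathrm{sgn}(\mu)\,|\mu|^{1/\beta}\,\tilde{Z}^{(\alpha)}_{+}$ of the medium term to the $J_1$-limit $V := \nu\,\widetilde{W}^{(\alpha)}$ of the walk term. The obstacle is that addition is not $J_2$-continuous in general, so joint convergence plus a bare continuous-mapping argument does not suffice. My plan is to exploit independence: since $\tilde{Z}^{(\alpha)}_{+} \perp \widetilde{W}^{(\alpha)}$, the limits $U$ and $V$ almost surely share no jump time. Passing to Skorokhod representations so that all convergences hold almost surely, I would take the reordering time change $\rho_n \to \id$ of case 2 and check that it can be chosen to act only inside vanishing neighborhoods of the (finitely many, on each compact) large jumps of the medium term, equalling the identity elsewhere. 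On each such window $V$ is continuous (disjoint jumps), so by Remark \ref{rmk-local-unif-conv} the oscillation of $\nu\,\diffusionscaledRW$ over the window vanishes and $\nu\,\diffusionscaledRW \circ \rho_n$ stays uniformly close to $\nu\,\diffusionscaledRW$, which converges to $V$ in $J_1$; simultaneously $\diffusionscaledenvironment \circ \fluidscaledRW \circ \rho_n \to U$ in $J_1$ by the case-2 construction. Thus $\diffusionscaledflight \circ \rho_n$ is a sum of two $J_1$-convergent sequences whose limits have disjoint jumps, hence itself $J_1$-convergent to $U+V$; composing $\rho_n$ with the homeomorphic time change that realizes this last $J_1$-convergence produces a single bijection close to the identity along which $\diffusionscaledflight$ converges uniformly, giving $\diffusionscaledflight \dconv U + V$ in $J_2$. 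The main obstacle is precisely this step — constructing one $\rho_n$ that cleans the medium oscillations while leaving the walk fluctuation under $J_1$ control — and it is the disjointness of the jump sets of $\tilde{Z}^{(\alpha)}_{+}$ and $\widetilde{W}^{(\alpha)}$ that makes it possible.
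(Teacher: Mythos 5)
Your decomposition is precisely the paper's \eqref{decomposY}, and your treatment of cases 1 and 2 coincides with the paper's proof: the centered composition $\diffusionscaledenvironment \circ \fluidscaledRW \dconv \avgRW^{1/\beta}\tilde{Z}^{(\beta)}_{+}$ (for $\avgRW>0$) is obtained in \eqref{composizioneOmegaS} by rerunning the proof of Theorem \ref{th-conv-j2} with $\diffusionscaledenvironment, \tilde Z^{(\beta)}$ in place of $\diffusionscaledenvironmentnomean, \diffusionlimitenvironment$; the subordinate term is killed by the scaling prefactor; and $J_2$-convergence to a continuous (or null) limit is upgraded to $J_1$ via Remark \ref{rmk-local-unif-conv}. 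Where you genuinely diverge is case 3. The paper keeps the \emph{global} sorting bijection $\rho_n$ of Theorem \ref{th-conv-j2} (so the medium term converges in honest $J_1$ after the time change) and outsources the difficulty of summing a $J_2$-convergent and a $J_1$-convergent sequence to a standalone result, Theorem \ref{thm-j2cont} of the Appendix: the addition map is $J_2$-continuous at pairs with disjoint discontinuity sets, which applies by the a.s.\ condition \eqref{cond:discont} that you also invoke. You instead inline this step via a \emph{localized} bijection acting only in vanishing windows around the large jumps of the medium term, reducing to Whitt's $J_1$-continuity of addition; this is viable, and is in fact the same localization mechanism (invariant intervals $\mathcal{J}_i^{(x)}, \mathcal{J}_j^{(y)}$) that drives the paper's Appendix proof. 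Two caveats, both repairable. First, your $\rho_n$ is not the one from case 2: with reordering confined to windows around jumps of size $\ge \eps$, the composition $\diffusionscaledenvironment \circ \fluidscaledRW \circ \rho_n$ retains back-and-forth jumps of size up to roughly $\eps$ wherever $\fluidscaledRW$ recrosses a small discontinuity of $\diffusionscaledenvironment$, so each summand converges in $J_1$ only up to $O(\eps)$ and the conclusion must be assembled by an $\eps$-diagonal in the $J_2$ distance, not by a literal two-step continuous-mapping argument. Second, Remark \ref{rmk-local-unif-conv} does not directly bound the oscillation of $\diffusionscaledRW$ over the windows, since the jumps of $\widetilde{W}^{(\alpha)}$ are dense; the correct bound goes through the $J_1$ time change for $\diffusionscaledRW \to \widetilde{W}^{(\alpha)}$ together with the vanishing oscillation of $\widetilde{W}^{(\alpha)}$ near its a.s.\ continuity point. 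The paper's modular route buys a reusable topological lemma; yours keeps the proof self-contained at the cost of this heavier bookkeeping.
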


\begin{rem}
The same considerations as in Remark \ref{first-rem-j2} apply to the
optimality of the $J_2$ topology in the limits \eqref{eq:fCLT_alpha>beta} 
and \eqref {eq:fCLT_alpha_equals_beta}.
\end{rem}

\section{Proofs} \label{sec-proofs}

\subsection{Proof of Theorem \ref{th-1}: Convergence of 
finite-dimensional distributions }

We establish the assertion by extending the proof of \cite[Theorem 2.2]{blp}.
We first prove the following:
\begin{lem} \label{lemma31}
Let $\diffusionscaledenvironmentnomean$ and $\diffusionscaledRWnomean$ 
be the processes defined in \eqref{hat-omega} and \eqref{hat-S}, respectively. Then, when $n\to\infty$,
\begin{equation}\label{eq:joint-fCLT}
( \diffusionscaledenvironmentnomean , 
\diffusionscaledRWnomean)\dconv
(\diffusionlimitenvironment , 
\diffusionlimitRW)\qquad \mbox{in } 
(\cadlag \times \cadlag^+,\, J_1 \otimes J_1)\,,
\end{equation}
where $J_1 \otimes J_1$ denotes the product topology on the product space 
$\cadlag \times \cadlag^+$.
\end{lem}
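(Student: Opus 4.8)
The goal is to upgrade the two separate one-dimensional functional convergences \eqref{convergence-hat-omega} and \eqref{convergence-hat-S} to a \emph{joint} convergence of the pair $(\diffusionscaledenvironmentnomean, \diffusionscaledRWnomean)$ in the product Skorokhod space. The plan is to exploit the independence assumption between the environment $\omega$ and the underlying walk $S$. First I would recall that $\diffusionscaledenvironmentnomean$ is a measurable function of $\zeta = (\zeta_i)$ alone, while $\diffusionscaledRWnomean$ is a measurable function of $\xi = (\xi_i)$ alone, and that these two families are independent by the standing assumption in the definition of the model (see \eqref{S-n}). Hence, for every $n$, the joint law of $(\diffusionscaledenvironmentnomean, \diffusionscaledRWnomean)$ on $\cadlag \times \cadlag^+$ is exactly the product of the two marginal laws.

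The key step is then a general fact about weak convergence of product measures: if $\mu_n \dconv \mu$ on a metric space $E_1$ and $\nu_n \dconv \nu$ on a metric space $E_2$, then $\mu_n \otimes \nu_n \dconv \mu \otimes \nu$ on $E_1 \times E_2$ equipped with the product topology. I would apply this with $E_1 = (\cadlag, J_1)$, $E_2 = (\cadlag^+, J_1)$, $\mu_n = \mathrm{Law}(\diffusionscaledenvironmentnomean)$, $\nu_n = \mathrm{Law}(\diffusionscaledRWnomean)$, and limit laws given by $Z^{(\beta)}$ and $W^{(\alpha)}$. The marginal convergences $\mu_n \dconv \mu$ and $\nu_n \dconv \nu$ are precisely \eqref{convergence-hat-omega} and \eqref{convergence-hat-S}. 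Since the limit processes $\diffusionlimitenvironment$ and $\diffusionlimitRW$ are independent by construction (they are built from independent stable processes), the product limit law $\mu \otimes \nu$ coincides with $\mathrm{Law}(\diffusionlimitenvironment, \diffusionlimitRW)$, which is exactly the right-hand side of \eqref{eq:joint-fCLT}.

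To verify the product-convergence fact cleanly, I would argue via characterizing functionals: for bounded continuous $F: E_1 \to \R$ and $G: E_2 \to \R$, independence gives $\E{F(\diffusionscaledenvironmentnomean)\, G(\diffusionscaledRWnomean)} = \E{F(\diffusionscaledenvironmentnomean)}\, \E{G(\diffusionscaledRWnomean)} \to \E{F(\diffusionlimitenvironment)}\,\E{G(\diffusionlimitRW)} = \E{F(\diffusionlimitenvironment)\, G(\diffusionlimitRW)}$, using the marginal convergences on each factor. Products $F \otimes G$ of bounded continuous functions generate a convergence-determining class for the product topology on $E_1 \times E_2$ (this is standard when both factors are separable metric spaces, which holds here since the $J_1$ Skorokhod spaces are Polish), so this suffices to conclude joint weak convergence.

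The only genuine subtlety, and the step I expect to require the most care, is the topological bookkeeping: one must confirm that the two $J_1$ spaces appearing as factors are separable (so that product-measurability and the convergence-determining property of product functionals are legitimate), and that the product topology $J_1 \otimes J_1$ is the topology in which we claim convergence, rather than some finer or coarser joint Skorokhod topology on a single space of $\cadlag$ functions valued in $\R^2$. Since the statement explicitly asks only for the \emph{product} topology, no joint time-change is needed and the independence shortcut applies directly; this is exactly why the lemma is stated with $J_1 \otimes J_1$ rather than a two-dimensional $J_1$. Everything else is routine, and the heavy analytic lifting has already been done in establishing the marginal limits \eqref{convergence-hat-omega} and \eqref{convergence-hat-S}.
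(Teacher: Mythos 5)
Your proposal is correct and follows essentially the same route as the paper: the paper likewise invokes the independence of $\hat\omega^{(n)}$ and $\hat S^{(n)}$ together with the marginal convergences \eqref{convergence-hat-omega} and \eqref{convergence-hat-S}, and then cites \cite[Theorem 11.4.4]{Whitt} for exactly the product-convergence fact that you prove by hand via products of bounded continuous functionals and separability of the $J_1$ spaces. Your explicit verification of that cited fact is a fine (slightly more self-contained) substitute for the reference.
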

%

%
\begin{proof}
From \eqref{convergence-hat-omega} and \eqref{convergence-hat-S}
we have that $\diffusionscaledenvironmentnomean \dconv 
\diffusionlimitenvironment$ in $(\cadlag,J_1)$ and
$\diffusionscaledRWnomean \dconv \diffusionlimitRW$ in $(\cadlag^{+},J_1)$. 
Since $\diffusionscaledenvironmentnomean$ and
$\diffusionscaledRWnomean$ are independent, the result follows from 
\cite[Theorem 11.4.4]{Whitt}.
\end{proof}

By virtue of the Skorokhod Representation Theorem, we may assume that the 
convergence in the statement of Lemma \ref{lemma31} holds almost everywhere. 
If this is not the case, there exists a probability space where it does, and since 
the specifics of the probability space are irrelevant for the next discussion, we 
avoid here to change the notation for the processes in the new space. 
Notice also that since $\diffusionlimitenvironment$ is a $\beta$-stable process, 
it is almost surely continuous at $s$, for any $s \in \Reals$, and similarly 
$\diffusionlimitRW$ is almost surely continuous at $t$, for any $t \in \Reals^+$.
In particular, by the independence of the two processes,
the event that $\diffusionlimitRW$ is continuous 
at $t$ and $\diffusionlimitenvironment$ is continuous at 
$\diffusionlimitRW(t)$ has probability 1, for any $t \in \Reals^+$.
Therefore the hypotheses of the next lemma hold almost surely.

\begin{lem} \label{lemma33}
Fix $t >0$ and consider a realization $(\omega, S)$ of the random
medium and of the underlying random walk 
such that $\diffusionlimitRW$ is continuous in t and 
$\diffusionlimitenvironment$ is continuous at $\diffusionlimitRW(t)$.
Then we have
\begin{equation} \label{eq41}
\lim_{n \rightarrow \infty} \diffusionscaledenvironmentnomean 
( \diffusionscaledRWnomean (t)) = 
\diffusionlimitenvironment(\diffusionlimitRW(t)) .
\end{equation}
\end{lem}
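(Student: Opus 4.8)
The plan is to establish~\eqref{eq41} by exploiting the almost-sure joint convergence from Lemma~\ref{lemma31}, which (after passing to the Skorokhod-representation probability space) gives $\diffusionscaledenvironmentnomean \to \diffusionlimitenvironment$ in $(\cadlag, J_1)$ and $\diffusionscaledRWnomean \to \diffusionlimitRW$ in $(\cadlag^+, J_1)$ for the fixed realization $(\omega, S)$. The key point is that, under the continuity hypotheses, I can upgrade the topological convergence to the kind of pointwise control needed to pass the composition to the limit. First I would note that, since $\diffusionlimitRW$ is continuous at the fixed time $t$, Remark~\ref{rmk-local-unif-conv} applied to a small closed interval $I_0$ around $t$ (chosen at positive distance from the discontinuities of $\diffusionlimitRW$, which is possible because a \cadlag function has only countably many jumps and $t$ is a continuity point) yields $\sup_{u \in I_0}|\diffusionscaledRWnomean(u) - \diffusionlimitRW(u)| \to 0$; in particular the scalar convergence
\begin{equation*}
x_n \defeq \diffusionscaledRWnomean(t) \longrightarrow \diffusionlimitRW(t) =: x
\end{equation*}
holds for this realization.

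Next I would handle the outer function $\diffusionscaledenvironmentnomean$. Here the main subtlety is that I am evaluating a sequence of functions $\diffusionscaledenvironmentnomean$ at a sequence of moving points $x_n$, rather than at a fixed point, so ordinary pointwise convergence of $\diffusionscaledenvironmentnomean$ to $\diffusionlimitenvironment$ is not enough. The remedy is again local uniform convergence: since $\diffusionlimitenvironment$ is continuous at the point $x = \diffusionlimitRW(t)$ by hypothesis, I choose a small closed interval $K$ containing $x$ in its interior and at positive distance from the (countably many) discontinuities of $\diffusionlimitenvironment$. Remark~\ref{rmk-local-unif-conv} then gives $\sup_{y \in K}|\diffusionscaledenvironmentnomean(y) - \diffusionlimitenvironment(y)| \to 0$. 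Because $x_n \to x$ and $x$ lies in the interior of $K$, we have $x_n \in K$ for all large $n$, so
\begin{equation*}
|\diffusionscaledenvironmentnomean(x_n) - \diffusionlimitenvironment(x_n)| \le \sup_{y \in K}|\diffusionscaledenvironmentnomean(y) - \diffusionlimitenvironment(y)| \longrightarrow 0.
\end{equation*}

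Finally I would combine the two pieces by a triangle inequality:
\begin{equation*}
|\diffusionscaledenvironmentnomean(x_n) - \diffusionlimitenvironment(x)| \le |\diffusionscaledenvironmentnomean(x_n) - \diffusionlimitenvironment(x_n)| + |\diffusionlimitenvironment(x_n) - \diffusionlimitenvironment(x)|.
\end{equation*}
The first term vanishes by the previous step, and the second vanishes because $\diffusionlimitenvironment$ is continuous at $x$ and $x_n \to x$. This gives exactly~\eqref{eq41}. I expect the main obstacle to be the careful justification of the local uniform convergence of $\diffusionscaledenvironmentnomean$ near $x$ \emph{together with} the movement of the evaluation points $x_n$: the danger is that $x$ could coincide with a jump of $\diffusionlimitenvironment$, in which case $J_1$-convergence would only give convergence up to a time-shift and the argument would collapse. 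This is precisely why the continuity of $\diffusionlimitenvironment$ at $\diffusionlimitRW(t)$ is assumed in the statement, and the whole argument hinges on using it to select the interval $K$ avoiding discontinuities so that $J_1$-convergence degenerates to genuine uniform convergence there.
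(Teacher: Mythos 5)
Your argument has a genuine gap, and it sits exactly where you located the ``main obstacle''. Both applications of Remark \ref{rmk-local-unif-conv} are invalid here: you justify the existence of the intervals $I_0$ and $K$ by saying that a \cadlag function has only countably many jumps, but countable does not imply non-dense, and for the limit processes in this lemma the jump times are almost surely \emph{dense}. Indeed $\diffusionlimitRW$ and $\diffusionlimitenvironment$ are stable L\'evy processes with infinite L\'evy measure, so every time interval of positive length a.s.\ contains infinitely many (small) jumps. Consequently there is no interval around $t$ at positive distance from $\mathrm{Disc}(\diffusionlimitRW)$, and no interval $K$ around $x=\diffusionlimitRW(t)$ at positive distance from $\mathrm{Disc}(\diffusionlimitenvironment)$; the local uniform convergence $\sup_{y\in K}|\diffusionscaledenvironmentnomean(y)-\diffusionlimitenvironment(y)|\to 0$ that your second step rests on simply fails (uniform convergence would force the jump locations of the prelimit to match those of $Z^{(\beta)}$ exactly, which $J_1$-convergence does not provide).

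The correct mechanism, which is what the paper's proof uses, needs only continuity of the limits at the \emph{single points} $t$ and $\diffusionlimitRW(t)$, not a jump-free neighborhood. One works directly with the $J_1$ time changes: a homeomorphism $\varphi_n$ on $[0,t+1]$ gives $|\diffusionscaledRWnomean(t)-\diffusionlimitRW(t)|<\eta$ (your scalar convergence $x_n\to x$ is true, but this is how it must be proved), and a homeomorphism $\psi_n$ fixing $0$ gives $|\diffusionscaledenvironmentnomean(s)-\diffusionlimitenvironment(\psi_n(s))|<\eta/2$ with $|\psi_n(s)-s|<\eta/2$. Evaluating at $s=\diffusionscaledRWnomean(t)$, the time-changed point $\psi_n(\diffusionscaledRWnomean(t))$ lands within $2\eta$ of $\diffusionlimitRW(t)$, where the \emph{oscillation} of $\diffusionlimitenvironment$ is small by continuity at that one point --- even though this window contains infinitely many small jumps. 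In other words, $J_1$-closeness buys you sup-closeness only after composing with $\psi_n$, and the error $|\diffusionlimitenvironment(\psi_n(\diffusionscaledRWnomean(t)))-\diffusionlimitenvironment(\diffusionlimitRW(t))|$ is absorbed by the pointwise modulus of continuity, not by uniform convergence on a neighborhood. Your triangle-inequality skeleton is fine, but the middle term must be $\diffusionlimitenvironment\circ\psi_n$ evaluated at the moving point rather than $\diffusionlimitenvironment$ itself; as written, the step $|\diffusionscaledenvironmentnomean(x_n)-\diffusionlimitenvironment(x_n)|\to 0$ is unavailable.
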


\begin{proof}
Let $\varepsilon \in (0,1)$ and $\eta \in (0,\varepsilon)$ be such that
\begin{equation} \label{cont-Z}
\sup_{s : \vert s - \diffusionlimitRW(t) \vert < 2 \eta} \vert 
\diffusionlimitenvironment(\diffusionlimitRW(t)) - \diffusionlimitenvironment(s) 
\vert < \frac{\varepsilon}{2}.
\end{equation}
Also choose $\varsigma \in (0, \eta)$ so that
\begin{equation} \label{cont-W}
\sup_{u : \vert u - t \vert < \varsigma} \vert \diffusionlimitRW(t) - 
\diffusionlimitRW(u) \vert < \frac{\eta}2.
\end{equation}

Let $n$ be large enough so that  $d_{J_1,[0,t+1]}(\diffusionscaledRWnomean ,\diffusionlimitRW) < 
\varsigma / 2$, see \eqref{d-j1-i}. In other words, there exists an increasing 
homeomorphism $\varphi_n$ of $[0, t + 1]$ such that, for all $u \in [0,t+1]$,
\begin{align} \label{J1-SW-1}
\vert u - \varphi_n(u) \vert < \frac{\varsigma}{2}  ,\\[4pt]
\vert \diffusionscaledRWnomean (u) - \diffusionlimitRW (\varphi_n(u)) 
\vert < \frac{\varsigma}{2}.\label{J1-SW-2}
\end{align}
Hence, using \eqref{J1-SW-2} and \eqref{cont-W} we get
\begin{equation} \label{inner-conv}
\begin{split}
&\vert \diffusionscaledRWnomean (t) - \diffusionlimitRW(t) \vert \\
&\qquad \le \vert \diffusionscaledRWnomean (t) - \diffusionlimitRW
(\varphi_n(t)) \vert + \vert \diffusionlimitRW(\varphi_n(t)) - \diffusionlimitRW(t) 
\vert \\
&\qquad < \frac{\varsigma}{2} + \frac{\eta}{2} < \eta,
\end{split}
\end{equation}
%
%
since $ \vert \varphi_n(t) - t \vert < \varsigma/2$. 
Assume moreover that $n$ is large enough so that
\begin{align}
d_{J_1, [0, \vert \diffusionlimitRW(t)\vert + 1]}
\left(\diffusionscaledenvironmentnomean,Z^{(\beta)}\right) < 
\frac{\eta}{2} , \\[4pt]
\label{marco16}
d_{J_1, [0, \vert \diffusionlimitRW(t)\vert + 1]} \left(
\diffusionscaledenvironmentnomean(- \: \cdot ),Z^{(\beta)}(- \: \cdot)\right) < 
\frac{\eta}{2},
\end{align}
where the notation in the l.h.s.\ of \eqref{marco16} was introduced in 
Remark \ref{rmk-0in0}. Then there exists an increasing 
homeomorphism $\psi_n$ of $[- \vert\diffusionlimitRW(t)\vert - 1,
\vert\diffusionlimitRW(t)\vert + 1]$,  with $\psi_n(0) = 0$, such that, for all 
$s \in [- \vert\diffusionlimitRW(t)\vert - 1, \vert\diffusionlimitRW(t)\vert + 1]$,
\begin{align} \label{J1-omegaZ-1}
\vert s - \psi_n(s) \vert < \frac{\eta}{2} , \\[4pt]
\vert \diffusionscaledenvironmentnomean(s) - Z^{(\beta)}(\psi_n(s)) \vert  < \frac{\eta}{2}.\label{J1-omegaZ-2}
\end{align}
Note also that \eqref{inner-conv} ensures that $\diffusionscaledRWnomean(t) \in [- \vert\diffusionlimitRW(t)\vert - 1,\vert\diffusionlimitRW(t)\vert + 1]$, so that by \eqref{J1-omegaZ-1} and \eqref{inner-conv},
\begin{align} \label{condition-cont-Z}
\vert \psi_n(\diffusionscaledRWnomean(t)) - \diffusionlimitRW(t) \vert
&\leq
\vert \psi_n(\diffusionscaledRWnomean(t)) - \diffusionscaledRWnomean(t) \vert + \vert \diffusionscaledRWnomean(t) - \diffusionlimitRW(t) \vert\nnl
 &< \frac{\eta}2 + \eta < 2 \eta\,,
\end{align}
and from \eqref{J1-omegaZ-2} we get
\begin{equation} \label{medium-conv}
\vert \diffusionscaledenvironmentnomean ( \diffusionscaledRWnomean (t)) - \diffusionlimitenvironment(\psi_n(\diffusionscaledRWnomean (t))) \vert < \eta/2.
\end{equation}
Finally, using \eqref{medium-conv}, \eqref{cont-Z} and 
\eqref{condition-cont-Z}, we obtain:
\begin{align}
\vert &\diffusionscaledenvironmentnomean ( \diffusionscaledRWnomean (t)) - \diffusionlimitenvironment(\diffusionlimitRW(t)) \vert \nnl
&\leq
\vert \diffusionscaledenvironmentnomean ( \diffusionscaledRWnomean (t)) - \diffusionlimitenvironment(\psi_n(\diffusionscaledRWnomean(t))) \vert + \vert \diffusionlimitenvironment(\psi_n(\diffusionscaledRWnomean(t))) - \diffusionlimitenvironment(\diffusionlimitRW(t)) \vert \nnl
&\leq \frac{\eta}{2} + \frac{\varepsilon}2 < \varepsilon.
\end{align}
This shows \eqref{eq41}. 
\end{proof}

\begin{proof}[Proof of Theorem \ref{th-1}]
Let $m \in \mathbb{N}^+$ and $t_1,\ldots,t_m \in [0,+\infty)$. With probability 
one $\diffusionlimitRW$ is continuous at $t_1,\ldots, t_m$ and  
$\diffusionlimitenvironment$ is continuous at $\diffusionlimitRW(t_1),\ldots, 
\diffusionlimitRW(t_m)$. When restricting to such realizations, using
Lemma \ref{lemma33} with $t = t_i$, we have that 
$n^{-1/ \alpha \beta} \, Y_{\lfloor n \, t_i \rfloor} =
\diffusionscaledenvironmentnomean ( \diffusionscaledRWnomean (t_i)) $ 
converges almost surely to $\diffusionlimitenvironment(\diffusionlimitRW(t_i))$,  
for all $i\in\{1,\ldots, m\}$. On the intersection of these events of probability one, 
the joint convergence for all $i \in \{1,\ldots,m\}$ holds. This implies the desired 
distributional convergence.
\end{proof}

\subsection{Proof of Theorem \ref{th-2}: Limit theorems for 
\texorpdfstring{$\beta\in(1,2)$}{beta between one and two}} 

\label{section-functional}

Although Theorem \ref{th-2} was stated after Theorem \ref{th-conv-j2}, we
give the proof of the former first, because it is simpler and somehow
preliminary to the proof of the latter.
As a matter of fact, we only prove assertion {\itshape 1}. Assertion {\itshape 2}
is carried out similarly with no additional effort.

\begin{lem} \label{lem-marco}
The composition map $h: \cadlag_0 \times \cadlag^+ \into \cadlag^+$ (see 
Section \ref{subs-2-2} for the definitions of $\cadlag_0$ and $\cadlag^+$) 
defined by
\begin{equation} \label{composition-map}
  h(w,s) := w \circ s
\end  {equation}
is measurable and it is $J_1$-continuous on 
$(\mathcal C \cap \cadlag_0) \times \cadlag^+$, where $\mathcal C$ is the 
space of continuous functions on $\R$. More precisely, the continuity is 
intended w.r.t.\ the topology $J_1 \otimes J_1$ on the domain of $h$ and $J_1$ on 
its target space.
\end{lem}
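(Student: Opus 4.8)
The plan is to prove the two assertions separately, the continuity on $(\mathcal{C}\cap\cadlag_0)\times\cadlag^+$ being the heart of the matter. The guiding observation is that the hypothesis forces the \emph{outer} function $w$ to be continuous, and this is exactly what lets me avoid manufacturing a second time change for the $w_n$: I can instead recycle the very time change that already witnesses the inner convergence $s_n\to s$, and let the continuity of $w$ absorb the rest.

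For the continuity, suppose $(w_n,s_n)\to(w,s)$ in $J_1\otimes J_1$ with $w\in\mathcal{C}\cap\cadlag_0$. By Definition \ref{def-top-j1-dp} together with Remark \ref{rmk-open-closed}, it suffices to establish $w_n\circ s_n\to w\circ s$ in $(\cadlag([0,T]),J_1)$ for $T$ ranging over the continuity points of $s$; these are dense, and since $w$ is continuous every discontinuity of $w\circ s$ is a discontinuity of $s$, so by density and the standard restriction property of $J_1$-convergence this covers all the continuity points of $w\circ s$ required by the half-line definition. Fix such a $T$ and pick increasing homeomorphisms $\lambda_n$ of $[0,T]$ with $\sup_{[0,T]}|\lambda_n-\id|\to0$ and $\sup_{[0,T]}|s_n\circ\lambda_n-s|\to0$. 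I claim the \emph{same} $\lambda_n$ witnesses the convergence of the compositions: the time-change estimate is already in hand, so I only need
\begin{equation*}
\sup_{t\in[0,T]}\bigl|w_n(s_n(\lambda_n(t)))-w(s(t))\bigr|\to0 .
\end{equation*}
I would bound the left-hand side by $\sup_t|w_n(s_n(\lambda_n(t)))-w(s_n(\lambda_n(t)))|+\sup_t|w(s_n(\lambda_n(t)))-w(s(t))|$ and control each term on a fixed compact set. Since $s$ is bounded on $[0,T]$ and $s_n\circ\lambda_n\to s$ uniformly, for large $n$ all relevant arguments lie in a fixed interval $[-K,K]$. The second term vanishes because $w$, being continuous, is uniformly continuous on $[-K,K]$ while $|s_n(\lambda_n(t))-s(t)|\to0$ uniformly; the first term vanishes because $w_n\to w$ in $(\cadlag,J_1)$ with continuous limit $w$ forces $\sup_{[-K,K]}|w_n-w|\to0$ by Remark \ref{rmk-local-unif-conv}, the compact $[-K,K]$ being trivially at positive distance from the empty discontinuity set of $w$. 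This produces the two defining conditions for $J_1$-convergence and proves the continuity.

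For the measurability of $h$ on all of $\cadlag_0\times\cadlag^+$, I would invoke that the $J_1$ Borel $\sigma$-algebra on $\cadlag$ is generated by the coordinate evaluations, so it is enough to check that $(w,s)\mapsto w(s(t))$ is measurable for each fixed $t\ge0$. As $s\mapsto s(t)$ is measurable, this reduces to the joint measurability of the evaluation $(w,x)\mapsto w(x)$ on $\cadlag\times\R$, which I would obtain by approximating $x$ with dyadic points from the appropriate side (from above on $\R^+$, from below on $\R^-$) and passing to the limit via the one-sided continuity of the functions in $\cadlag$, each finite-stage map being manifestly measurable.

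The crux that must be identified and exploited is the role of the continuity of $w$: it simultaneously permits the use of a single time change, upgrades the $J_1$-convergence of the $w_n$ to local uniform convergence, and supplies the uniform continuity needed to push the inner error through; dropping it makes the composition genuinely $J_1$-discontinuous, since jumps of $w$ and $s$ may collide and $w_n\to w$ fails uniformly near a jump of $w$. The only real bookkeeping nuisance is matching the set of $T$'s dictated by the half-line topology to the continuity points of $s$, which is dispatched by density together with the restriction property of $J_1$-convergence.
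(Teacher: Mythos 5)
Your proof is correct and follows essentially the same route as the paper's: you reuse the single time change $\lambda_n$ witnessing $s_n\to s$, split via the triangle inequality, kill the first term by upgrading $w_n\to w$ to local uniform convergence on a compact $[-K,K]$ (Remark \ref{rmk-local-unif-conv}, using continuity of $w$), and kill the second by uniform continuity of $w$. Your only departures are refinements the paper glosses over — spelling out the measurability via coordinate evaluations and one-sided dyadic approximation, and carefully matching the admissible horizons $T$ (continuity points of $s$ versus of $w\circ s$) through density and the restriction property — both of which are sound.
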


\begin{proof}[Proof of Lemma \ref{lem-marco}]
As the measurability of $h$ is easy, we concentrate on the continuity statement.
Assume that, as $n \to \infty$, $(w_n, s_n) \to (w,s)$ in 
$(\cadlag_0 \times \cadlag^+, J_1 \otimes J_1)$, with 
$w \in \mathcal C\cap \cadlag_0$ and $s \in \cadlag^+$. This means that, for all 
$M,T>0$,
\begin{align}
\label{marco17}
w_n \to w \qquad &\text{in } (\cadlag([-M,M]),J_1), \\
\label{marco8}
s_n \to s \qquad &\text{in } (\cadlag([0,T]),J_1).
\end{align}
In particular, if we fix $T>0$, there exists a sequence $(\lambda_n)_{n\in\N}$ of 
homeomorphisms of $[0,T]$ such that
\begin{align}
\label{marco7}
\sup_{t\in[0,T]} | \lambda_n(t) - t| &\to 0, \\
\label{marco9}
\sup_{t\in[0,T]} | s_n \circ \lambda_n(t) - s(t)| &\to 0.
\end{align}
We have
\begin{align} \label{tau-cont}
& \sup_{t \in [0,T]} \vert w_n \circ s_n \circ \lambda_n (t) - w \circ s (t) \vert \nnl
&\qquad \leq \sup_{t \in [0,T]} \vert w_n \circ s_n \circ \lambda_n (t) - 
w \circ s_n \circ \lambda_n (t) \vert
+ \sup_{t \in [0,T]} \vert w \circ s_n \circ \lambda_n (t) - w \circ s (t) \vert \nnl
&\qquad \leq \sup_{u \in [-M,M]} 
\vert w_n (u) - w(u) \vert+ \sup_{t \in [0,T]} \vert w \circ s_n \circ \lambda_n (t) - 
w \circ s(t) \vert,
\end{align}
where $M=M(T) := \sup_n \, \sup_{v \in [0,T]} \vert s_n(v)\vert$. This quantity is finite 
because $s \in \cadlag^+$, and thus it is bounded on $[0,T]$, and from \eqref{marco8}. 

Now, the first of the last two terms of \eqref{tau-cont} vanishes by
\eqref{marco17} and Remark \ref{rmk-local-unif-conv}. The second term 
vanishes by \eqref{marco9} and the 
uniform continuity of $w$ on $[-M,M]$. Finally, \eqref{marco7}, \eqref{tau-cont}
and the arbitrariness of $T$ show that $ h(w_n,s_n) = w_n \circ s_n \to w \circ s = 
h(w,s)$, which is what we sought to prove.
\end{proof}

\begin{proof}[Proof of assertion {\itshape 1} of Theorem \ref{th-2}]
As defined in \eqref{Y-omegabar-Shat}, $\diffusionscaledflightnomean \defeq 
\fluidscaledenvironment \circ \diffusionscaledRWnomean$. Denoting by $h$ the
composition map as in the previous lemma, we set out to prove that
\begin{equation} \label{marco10}
\diffusionscaledflightnomean = h (\fluidscaledenvironment,
\diffusionscaledRWnomean) \dconv h ( \avgenvironment \tinyspace \id,  
\diffusionlimitRW ) =  \avgenvironment \diffusionlimitRW \qquad \text{in }
(\cadlag^+,J_1),
\end{equation}
as $n \to \infty$. We do so by applying the following extension of the 
Continuous Mapping Theorem, see e.g.\ \cite[Theorem 5.1]{Billingsley}:  
if $h: \mathcal{S} \into \mathcal{S}'$ is a measurable function between two 
metric spaces, which are also regarded as measure spaces w.r.t.\ the 
respective Borel $\sigma$-algebras, $(X_n)_{n\in\N}$ and $X$ are 
$\mathcal{S}$-valued random variables with $X_n \dconv X$, and  
$\mathbb P (X\in\mathrm{Disc}(h))=0$, where $\mathrm{Disc}(h) \subset 
\mathcal{S}$ denotes the set of discontinuities of $h$, then 
$h(X_n) \dconv h(X)$.

Note that the topological spaces $\cadlag_0$ and 
$\cadlag^+$, and thus $\cadlag_0 \times \cadlag^+$, are metrizable;
see the comment after \eqref{marco15}. From the independence of 
$\fluidscaledenvironment$ and  $\diffusionscaledRWnomean$, and by 
\eqref{convergence-bar-omega}, \eqref{convergence-hat-S}, and the 
definition of product topology,
\begin{equation} \label{conv1}
( \fluidscaledenvironment ,  \diffusionscaledRWnomean  ) \dconv 
( \avgenvironment\tinyspace \id, \diffusionlimitRW ) \qquad 
\text{in }(\cadlag_0 \times \cadlag^+,J_1 \otimes J_1).
\end{equation}
To apply the theorem and obtain \eqref{marco10} 
it remains to prove that the probability that
$( \avgenvironment\tinyspace \id, \diffusionlimitRW )$ hits a discontinuity
of $h$ is zero. But $( \avgenvironment \tinyspace \id, \diffusionlimitRW ) \in
(\mathcal C \cap \cadlag_0) \times \cadlag^+$, where $h$ is 
continuous by Lemma \ref{lem-marco}.
\end{proof}


\subsection{Proof of Theorem \ref{th-conv-j2}: Limit theorems for 
\texorpdfstring{$\beta\in(0,1)$}{beta less than one}}

\label{subs3-3}

In comparison with the proof of Theorem \ref{th-2}, the main technical 
hurdle here is that in the composition $\diffusionscaledflightnomean
= \diffusionscaledenvironmentnomean \circ \fluidscaledRW$, cf.\
\eqref{Y-omegahat-Sbar},
the inner function (also referred to as \emph{random time change})
is not increasing and one cannot use \cite[Theorem 5.1]{Billingsley}. 
We shall only prove Theorem \ref{th-conv-j2} in the 
case $\avgRW>0$, as the other case is all but identical.

In view of Remark \ref{rmk-open-closed},
we need to show that, for any $T>0$, which we consider fixed throughout 
this proof, the restriction of $\diffusionscaledflightnomean$ to $[0,T)$ 
converges in $(\cadlag([0,T)), J_{2})$ to the 
restriction of $\diffusionlimitRW \circ \avgRW\tinyspace\id$ to $[0,T)$.
By a double use of the Skorokhod Representation Theorem, there exist 
two probability spaces $(\Omega_1, \mathbb P_1)$ and 
$(\Omega_2, \mathbb P_2)$, and processes
\begin{equation}\label{dimj2:processo_omega}
\begin{array}{l}
\boldsymbol{\diffusionscaledenvironmentnomean}:\Omega_1\to 
\cadlag,\\
\boldsymbol{\diffusionlimitenvironment}: \Omega_1\to \cadlag,\\
\boldsymbol{\fluidscaledRW}: \Omega_2\to \cadlag^+,\\
\boldsymbol{\tilde{S}^{(n)}}: \Omega_2\to \cadlag^+,\\
\boldsymbol{\widetilde{W}^{(\alpha)}}: \Omega_2\to \cadlag^+,
\end{array}
\end{equation}
with, respectively, the same distributions as $\diffusionscaledenvironment, 
\diffusionlimitenvironment, \fluidscaledRW, \tilde{S}^{(n)}, 
\widetilde{W}^{(\alpha)}$, such that the distributional convergences
\eqref{convergence-hat-omega}, \eqref{convergence-bar-S} and 
\eqref{convergence-tildeS} become almost sure convergences in the 
suitable spaces:

\begin{equation}
\begin{array}{lll}
\boldsymbol {\diffusionscaledenvironmentnomean} \asconv 
 \boldsymbol{\diffusionlimitenvironment} &\text{on } 
(\Omega_{1},\mathbb{P}_1)& \inDJ{1}  \\
\boldsymbol{\fluidscaledRW} \asconv \avgRW \tinyspace \id 
&\text{on } (\Omega_{2},\mathbb{P}_2)& \inDPJ{1},\\

\boldsymbol{\tilde{S}^{(n)}} \asconv \boldsymbol{\widetilde{W}^{(\alpha)}} 
& \text{on } (\Omega_{2},\mathbb{P}_2) &\inDPJ{1}.
\end{array}
\end{equation}
%
%
Since $\diffusionscaledenvironmentnomean$ and $\bar{S}^{(n)}$ are 
independent, we regard the processes 
\eqref{dimj2:processo_omega} as defined on 
$(\Omega_{1}\times\Omega_2, \mathbb{P}_{1}\times \mathbb{P}_{2})$,
so all the joint distributions of processes in boldface type are the 
same as for the corresponding processes in regular type. Also, in the 
interest of readability and confident there will be no confusion, we 
slightly abuse the notation and write the boldface processes in regular 
type.

Let us denote by $\Omega_1'$ the set of realizations
$\gamma_{1}\in\Omega_{1}$ such that
$\diffusionscaledenvironmentnomean{[\gamma_1]} \to
\diffusionlimitenvironment{[\gamma_1]}$, as $n\to\infty$, in $(\cadlag, J_1)$. 
In particular $\mathbb{P}_1(\Omega_1')=1$.
Similarly, let us denote by $\Omega_2'$ the set of realizations 
$\gamma_{2}\in\Omega_{2}$ such that $\diffusionscaledRW{[\gamma_2]} \to
\widetilde{W}^{(\alpha)}{[\gamma_2]}$ in $(\cadlag^+, J_1)$. Again
$\mathbb{P}_2(\Omega_2')=1$.
Since $\diffusionscaledRW = (n\fluidscaledRW - \avgRW 
\lfloor n\, \cdot \rfloor)/n^{1/\alpha}$ converges almost surely to a L\'evy stable 
process, whose trajectories are bounded when restricted to $[0,T)$
(though not uniformly bounded in $\gamma_2$), it is easy to see that, 
for any $\eta \in (0,1)$, there exist $C_{\eta}>0$ and $\bar n_\eta \in\Naturals$ 
such that the event
\begin{equation} \label{B_eta}
B_{\eta} \defeq \Big\{ \gamma_2\in\Omega'_2 : \sup_{t\in[0,T)}
\frac{n\vert \fluidscaledRW {[\gamma_2]} (t) - \avgRW \tinyspace t\vert}
{n^{1/\alpha}} \leq C_{\eta} \text{ for } n\geq  \bar n_\eta\Big\}
\end{equation}
has probability $\mathbb P_2(B_{\eta}) > 1-\eta$. Observe that 
$C_\eta$ and $\bar n_\eta$ depend on $T$ as well. 
Our goal for the rest of the proof will be to show that, for all realizations 
$(\gamma_1,\gamma_2)\in\Omega_{1}'\times B_{\eta}$, 
\begin{equation}
\label{goal_proof_convj2_inter}
\diffusionscaledenvironmentnomean\circ\fluidscaledRW 
\longrightarrow \diffusionlimitenvironment\circ\avgRW\tinyspace\id 
\qquad \text{in } (\cadlag([0,T)),J_2),
\end{equation}
as $n \to \infty$. This easily implies that the above convergence holds
almost surely in $(\Omega_{1}\times\Omega_{2},
\mathbb{P}_1\times\mathbb{P}_2)$. In fact, it holds on $\Omega_1'
\times \bigcup_{k\in\Naturals} B_{\eta_k}$, where $(\eta_k)_{k \in \N}$
is some vanishing sequence of numbers in $(0,1)$, and
\begin{equation}
\mathbb{P}_{2} \! \left( \bigcup_{k\in\Naturals} B_{\eta_k} \right) \ge
\limsup_{k \to \infty} \, \mathbb{P}_2(B_{\eta_k}) \ge \limsup_{k \to \infty}
\, (1-\eta_k)=1.
\end{equation}
Now, almost sure convergence implies distributional convergence (for the 
boldface processes and thus for the original processes). Finally, since 
$\diffusionlimitenvironment_\pm$ are $\beta$-stable and having assumed
that $\avgRW>0$, we observe that $\diffusionlimitenvironment \circ \avgRW
\tinyspace\id =\mu^{1/\beta} Z^{(\beta)}_+$, thus proving \eqref{eq-conv-j2}.

So we are left with proving \eqref{goal_proof_convj2_inter}. We first
give a plan of the proof, warning the reader that all the involved 
quantities depend in general on $(\gamma_1,\gamma_2) \in 
\Omega_1'\times B_{\eta}$, but we often omit this dependence. 
As per Definition \ref{def-j2}, we will need to construct bijections 
$\tau_n:[0,T) \into [0,T)$ such that, when $n \to \infty$,
\begin{align}
&\sup_{t\in[0,T)} 
| \diffusionscaledenvironmentnomean \circ \fluidscaledRW 
\circ \tau_n(t) - \diffusionlimitenvironment (\avgRW t) | \to 0, 
\label{newgoal1} \\
&\sup_{t\in[0,T)}  | \tau_n(t) - t| \to 0.
\label{newgoal2}
\end{align}

\begin{itemize}
\item[1.]
As a first step to obtain $(\tau_n)_{n\in\N}$, we construct bijections $\rho_n :
[0,T) \into [0,T)$ such that
\begin{align}
&\sup_{t\in[0,T)} |\rho_n(t) - t| \longrightarrow 0 \\
\label{marco1}
&\fluidscaledRW\circ \rho_n \in \cadlag_{\mbox{pc}}([0,T)) \cap
\cadlag_0([0,T)), \\
\label{marco2}
&\fluidscaledRW\circ \rho_n \longrightarrow \avgRW\tinyspace \id 
\qquad \text{in } (\cadlag([0,T)), J_1),
\end{align}
where $\cadlag_{\mbox{pc}}([0,T))$ denotes the set of \cadlag piecewise constant 
functions of $[0,T)$ and $\cadlag_0([0,T))$ denotes the set of \cadlag 
nondecreasing functions of $[0,T)$. See Figure \ref{fig:example_time_change} 
(upper panel) for an example of $\rho_n$ associated to a given realization 
of $\fluidscaledRW$.
\begin{figure}[t!]
\begin{center}
\begin{tabular}{c}
\includegraphics[scale=0.9]{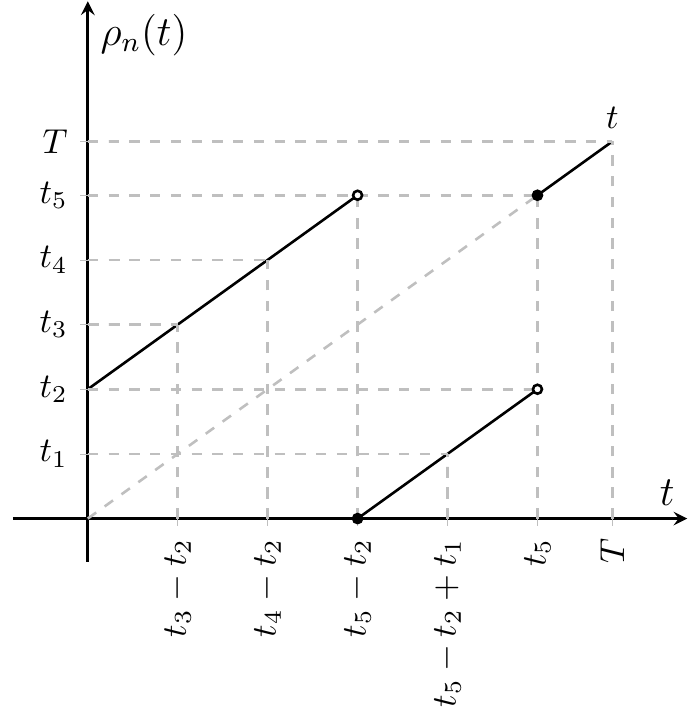}
\end{tabular}
\begin{tabular}{cc}
\includegraphics[scale=0.8]{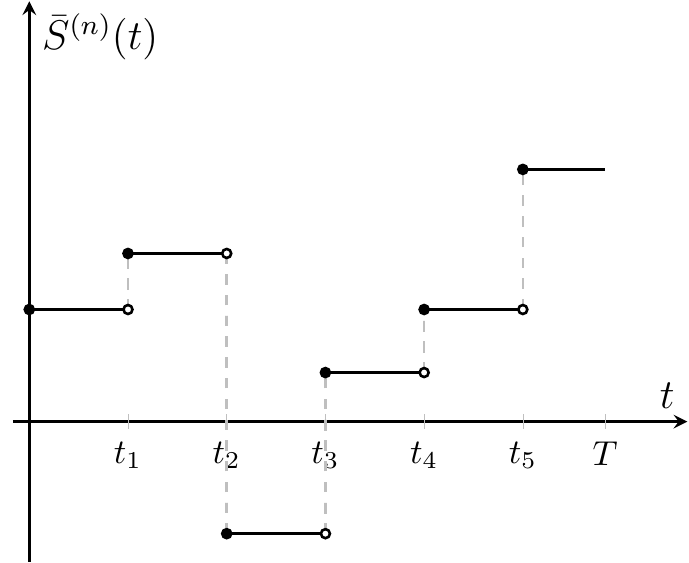} &
\includegraphics[scale=0.8]{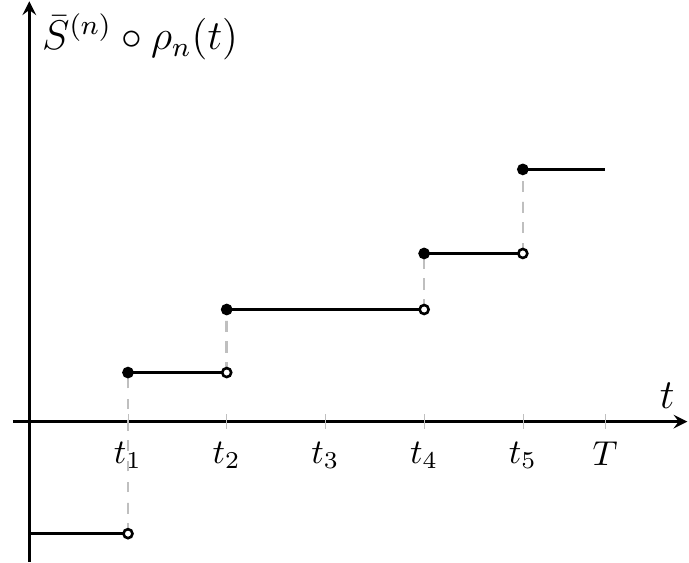}
\end{tabular}
\end{center}
\caption{Upper panel: representation of $\rho_n$ in $[0,T)$. Lower panel: sample path of $\fluidscaledRW$ in $[0,T)$ (left) and the nondecreasing composition $\fluidscaledRW \circ \rho_n$ (right).}\label{fig:example_time_change}
\end{figure}
\item[2.] Since $\diffusionscaledenvironmentnomean \longrightarrow 
\diffusionlimitenvironment$ in $(\cadlag,J_1)$ for any $\gamma_1\in\Omega'_1$, 
we can apply \cite[Theorem 3.1]{whitt1980some}, which gives sufficient 
conditions for the composition of two \cadlag functions to be
continuous in the $J_1$ topology. Using \eqref{marco1}-\eqref{marco2} 
we will get
\begin{equation}
\label{goal_proof_convj1_inter}
\diffusionscaledenvironmentnomean\circ\fluidscaledRW \circ\rho_n 
\longrightarrow \diffusionlimitenvironment\circ\avgRW\tinyspace\id  
\qquad \text{in } (\cadlag([0,T)), J_1),
\end{equation}
for any $(\gamma_1,\gamma_2)\in\Omega'_{1}\times\,B_{\eta}$.
By definition of $J_1$-convergence, there exists a sequence of 
homeomorphisms $\lambda_n$ of $[0,T)$, such that, as $n\to\infty$,
\begin{align}
\label{lambda_conv_id}
&\sup_{t\in[0,T)} | \lambda_n(t)-t | \to 0,\\
\label{eq:conv_J2:c2}%
&\sup_{t\in[0,T)} | \diffusionscaledenvironmentnomean \circ
\fluidscaledRW\circ\rho_n\circ\lambda_n(t) -
\diffusionlimitenvironment (\avgRW t) | \to 0.
\end{align}

\item[3.]  Note that \eqref{eq:conv_J2:c2} is exactly \eqref{newgoal1} 
for the bijection $\tau_n := \rho_n\circ\lambda_n$. Therefore, it will
remain to establish \eqref{newgoal2}, that is,
\begin{equation} \label{marco5}
\sup_{t\in[0,T)} | \rho_n\circ\lambda_n(t) -t | \to 0.
\end{equation}
\end{itemize}
We now fill the gaps in the steps above.

\paragraph{Construction of $\rho_n$.}
We begin by constructing the bijection $\rho_n : [0,T) \into [0,T)$.
Since $\fluidscaledRW(t) = \RW[\lfloor n t \rfloor]/n$ 
for $t\in[0,T)$ and $\gamma_2 \in B_{\eta}$, we have that 
\begin{equation} \label{eq:RW_uniformly_close_id}
\vert\RW[\lfloor u \rfloor]- \avgRW u\vert\leq C_\eta\tinyspace n^{1/\alpha},
\end{equation}
uniformly for $u\in[0,nT)$. Without loss of generality, we assume that 
$nT\in\Naturals$. If not, one can take $T'$ slightly larger than $T$, with 
$nT'\in\Naturals$, and work in $[0,T']$. 
Let $p(\cdot)$ be a permutation of $\{0,1,\dots, nT-1\}$ such that $S_{p(0)} \leq S_{p(1)}\leq\dots \leq\,S_{p(nT-1)}$. We define $\rho_n$ as follows: 
%
\begin{equation}
\rho_n(t) \defeq t - \frac{i}{n} + \frac{p(i)}{n},\qquad t\in\Big[\frac{i}{n}, \frac{i+1}{n}\Big),~i\in\{0,\ldots,nT-1\}.
\end{equation}
Clearly, $\rho_n$ is a bijection that maps $[i/n, (i+1)/n)$ affinely onto 
$[p(i)/n, (p(i)+1)/n)$. By construction of $p(\cdot)$, $\fluidscaledRW\circ\rho_n$ 
is nondecreasing. The next proposition shows that $\rho_n$ is uniformly close to the identity.
\begin{prop} \label{prop:ro_close_id}
For all $\gamma_2 \in B_{\eta}$, 
\begin{align}\label{eq:ro_close_id}
\sup_{i\in\{0,\ldots, nT-1\}}\vert i -p(i)\vert \leq\frac{2C_{\eta}}{\avgRW} \tinyspace
n^{1/\alpha} + 1.
\end{align}
\end{prop}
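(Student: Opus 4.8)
The plan is to translate the statement (\ref{eq:ro_close_id}), which compares the permutation $p(i)$ to $i$, into the uniform closeness of $\fluidscaledRW$ to $\avgRW\tinyspace\id$ encoded in $B_\eta$. The key observation is that $p(\cdot)$ is precisely the permutation that sorts the values $\RW[0],\ldots,\RW[nT-1]$ in nondecreasing order, so $p(i)$ is the index of the $(i+1)$-th smallest among these. Fix $i\in\{0,\ldots,nT-1\}$ and suppose, towards a contradiction with the desired bound, that $p(i)$ is much larger than $i$, say $p(i) > i + K$ where $K := \lfloor 2C_\eta n^{1/\alpha}/\avgRW\rfloor + 1$; I will then derive a contradiction, and the case $p(i) < i - K$ is symmetric.

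First I would extract from $B_\eta$, via (\ref{eq:RW_uniformly_close_id}) with $u = \lfloor u\rfloor = j$ an integer, the two-sided bound
\begin{equation} \label{Stwosided}
\avgRW\tinyspace j - C_\eta\tinyspace n^{1/\alpha} \le \RW[j] \le \avgRW\tinyspace j + C_\eta\tinyspace n^{1/\alpha},
\qquad j\in\{0,\ldots,nT-1\}.
\end{equation}
The point is that, because $\avgRW>0$, the walk $\RW[j]$ is trapped in a band of width $2C_\eta\tinyspace n^{1/\alpha}$ around the steadily increasing line $\avgRW\tinyspace j$. Now the key counting step: since $p(i)$ is the index realizing the $(i+1)$-th smallest value, exactly $i$ indices $j$ satisfy $\RW[j] < \RW[p(i)]$ (with the usual tie-breaking built into the choice of $p$), so at least $nT - i - 1$ indices $j$ satisfy $\RW[j]\ge \RW[p(i)]$. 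I would compare the value $\RW[p(i)]$ against the line: from (\ref{Stwosided}), $\RW[p(i)] \ge \avgRW\tinyspace p(i) - C_\eta\tinyspace n^{1/\alpha}$, while any index $j$ with $\avgRW\tinyspace j + C_\eta\tinyspace n^{1/\alpha} < \avgRW\tinyspace p(i) - C_\eta\tinyspace n^{1/\alpha}$, i.e.\ with $j < p(i) - 2C_\eta\tinyspace n^{1/\alpha}/\avgRW$, necessarily has $\RW[j] < \RW[p(i)]$. Thus every integer $j$ in the range $0 \le j < p(i) - 2C_\eta\tinyspace n^{1/\alpha}/\avgRW$ contributes to the $i$ indices lying strictly below, which forces $i \ge p(i) - 2C_\eta\tinyspace n^{1/\alpha}/\avgRW$, i.e.\ $p(i) - i \le 2C_\eta\tinyspace n^{1/\alpha}/\avgRW$. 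A symmetric argument, counting the indices lying above $\RW[p(i)]$ and using that there are at most $nT - i - 1$ of them, yields $i - p(i) \le 2C_\eta\tinyspace n^{1/\alpha}/\avgRW$, and together these give (\ref{eq:ro_close_id}) after absorbing the integer rounding into the $+1$.

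The main obstacle I anticipate is bookkeeping around ties and the exact boundary of the band, rather than anything conceptually deep: the inequalities in (\ref{Stwosided}) are not strict, so an index $j$ sitting exactly at the critical value $\avgRW\tinyspace p(i) - 2C_\eta\tinyspace n^{1/\alpha}$ could in principle have $\RW[j] = \RW[p(i)]$ and be ordered either before or after $p(i)$ by the sorting permutation. I would handle this by being slightly generous with constants—using the $+1$ term in (\ref{eq:ro_close_id}) to absorb a single borderline index and any floor/ceiling discrepancies—so that the counting argument needs only the strict inequality on the open half of the band. The essential mechanism is simply that a function staying within a fixed vertical band around a line of positive slope cannot have its sorted order differ from its index order by more than the band width divided by the slope, and that is exactly what (\ref{eq:ro_close_id}) asserts.
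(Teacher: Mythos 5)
Your proposal is correct and follows essentially the same argument as the paper: both exploit the band $\vert S_{\lfloor u\rfloor} - \mu u\vert \le C_\eta n^{1/\alpha}$ encoded in $B_\eta$ and compare the rank $i$ of $S_{p(i)}$ in the sorted list with a count of indices whose values are forced below (resp.\ above) $S_{p(i)}$ by the positive slope of the line --- the paper doing this directly via lower bounds on the cardinalities of $L_i=\{j: S_j \le S_{p(i)}\}$ and $U_i=\{j: S_j \ge S_{p(i)}\}$ over the intervals $\mathcal B_i$ and $\mathcal A_i$, you via the equivalent contrapositive counting. If anything, your use of strict inequalities to sidestep ties is slightly more careful than the paper's claim that $\vert L_i\vert = i+1$, but the mechanism is identical.
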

\begin{proof}
To better explain the proof we refer to Figure 
\ref{fig:discontinuous_bijection_close_to_identity}, which shows a sample 
path of $u\mapsto S_{\lfloor u\rfloor}$ and the corresponding upper and lower 
bounds given by \eqref{eq:RW_uniformly_close_id}.

\begin{figure}[t!]
\begin{center}
\begin{tabular}{c}
\includegraphics[scale=1.1]{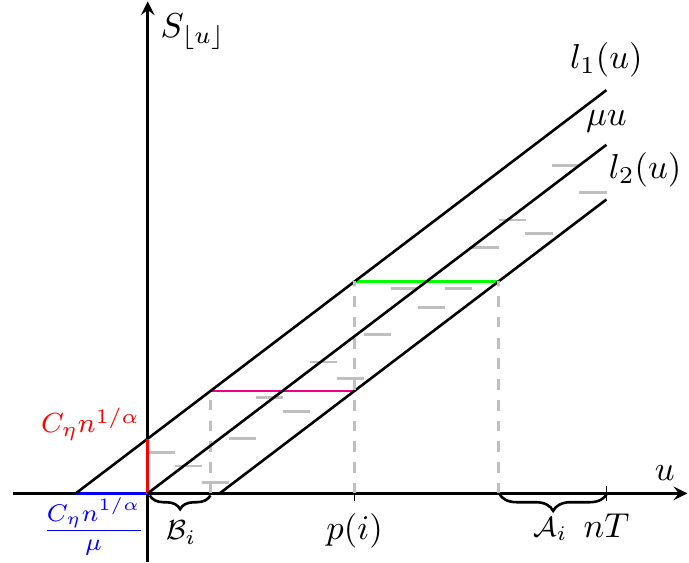}
\end{tabular}
\end{center}
\caption{A sample path of $S_{\lfloor u\rfloor}$ and the corresponding upper and 
    lower bounds given by \eqref{eq:RW_uniformly_close_id} and represented by the 
    graphs of the functions $l_{1}(u)=\mu u + C_{\eta}n^{1/\alpha}$ and 
    $l_{2}(u)=\mu u - C_{\eta}n^{1/\alpha}$. The horizontal and verticals sizes of the 
    strip are, respectively, $2 C_\eta n^{1/\alpha}/\avgRW$ and 
    $2 C_\eta n^{1/\alpha}$. The sets $\mathcal B_i, \mathcal A_i$ are defined
    in the body of the text.}
 \label{fig:discontinuous_bijection_close_to_identity}
\end{figure}


%
%
%
We establish \eqref{eq:ro_close_id} by estimating from below the cardinality of 
the sets
\begin{align}
L_i\defeq\{j \in \{0,\ldots,nT-1\} : S_{p(i)} \ge S_j\},\\
U_i\defeq\{j \in \{0,\ldots,nT-1\} : S_{p(i)} \le S_j\}.
\end{align}
Let us begin with a lower bound for $\vert L_i\vert$. To this end, consider the 
interval $\mathcal B_i := [0,b_{i,n}] \subseteq[0,nT)$, where $b_{i,n} := p(i) - 
2 C_{\eta}n^{1/\alpha}/\avgRW$. This set was introduced because it has the 
property that, for all $u \in \mathcal B_i$, $S_{\lfloor u \rfloor} \le S_{p(i)}$, see 
Figure \ref{fig:discontinuous_bijection_close_to_identity}. Observe that, for 
small values of $p(i)$, $\mathcal B_i$ might be empty. These considerations
show that
\begin{align}\label{eq:nr_jumps_less_p_i}
\vert L_i\vert \geq \vert \mathcal B_i \cap L_i\vert = 
\vert \mathcal B_i \cap \Z \vert = \max \{ \lfloor b_{i,n}\rfloor + 1, 0 \}.
\end{align}
On the other hand, since $S_{p(i)}$ is the $(i+1)$-th smallest value of the set 
$\{S_j\}_{j=0}^{nT-1}$, we know that  $\vert L_i\vert = i+1$. From the above
inequality, then,
\begin{align}
i \geq \lfloor b_{i,n}\rfloor
=
\Big\lfloor p(i) - 2 \frac{C_{\eta}n^{1/\alpha}}{\avgRW} \Big\rfloor
\geq
p(i) - 2 \frac{C_{\eta}n^{1/\alpha}}{\avgRW} -1.
\end{align}
%

We proceed analogously to produce a lower bound for $\vert U_i\vert$. Set 
$\mathcal A_i := [a_{i,n}, nT)$, where $a_{i,n} = p(i) + 2 C_{\eta} n^{1/\alpha}/\avgRW$.
Figure \ref{fig:discontinuous_bijection_close_to_identity} shows that 
$S_{\lfloor t \rfloor} \ge S_{p(i)}$ for all $t \in \mathcal A_i$, whence 
$\vert \mathcal A_i \cap  U_i\vert = \lfloor nT - a_{i,n}\rfloor +1$. On the other hand,
$\vert U_i \vert = nT - i +1$. Since $\vert U_i\vert \geq \vert \mathcal A_i \cap  U_i\vert$ 
we get
\begin{align}
nT - i \geq \lfloor nT - a_{i,n}\rfloor \geq nT - a_{i,n} -1,
\end{align}
and so
\begin{align}
i \leq a_{i,n} +1 = p(i) + 2 \frac{C_{\eta} n^{1/\alpha}}{\avgRW} +1,
\end{align}
concluding the proof.
%
\end{proof}

\paragraph{$J_2$-convergence on $\Omega'_{1}\times\,B_{\eta}$.}
To derive \eqref{goal_proof_convj1_inter}
we first notice that, for any $\gamma_2 \in B_{\eta}$,
\begin{equation} \label{marco3}
\fluidscaledRW\circ\rho_n \to \avgRW\tinyspace \id
\qquad \text{in } (\cadlag([0,T)), J_1),
\end{equation}
as $n\to\infty$. This is in fact a consequence of the following uniform 
convergence:
\begin{equation}
\begin{split}
&\sup_{t\in[0,T)}\vert\fluidscaledRW\circ\rho_n(t)-\avgRW\tinyspace t\vert \\
&\qquad = \sup_{i\in\{0,\ldots, nT-1\}}\sup_{t\in[i/n,(i+1)/n)} 
\Big\vert\fluidscaledRW\Big(t-\frac{i-p(i)}{n}\Big)-\avgRW\tinyspace t\Big\vert \\
&\qquad \leq \sup_{i\in\{0,\ldots, nT-1\}}\sup_{t\in[i/n,(i+1)/n)} \Big\vert
\fluidscaledRW\Big(t-\frac{i-p(i)}{n}\Big)-\avgRW\Big(t-\frac{i-p(i)}{n}\Big)\Big\vert \\
&\qquad\quad+ \sup_{i\in\{0,\ldots, nT-1\}}\Big\vert \frac{i-p(i)}{n}\Big\vert \\
&\qquad = C_{\eta}\frac{n^{1/\alpha}}{n} + \frac{2C_{\eta}}{\avgRW}\frac{n^{1/\alpha}}{n}+ \frac{1}{n},
\end{split}
\end{equation}
which vanishes for $n\to\infty$. Now the plan is to once again apply 
\cite[Theorem 3.1]{whitt1980some} to show that \eqref{marco3} and the
limit $\diffusionscaledenvironmentnomean \to
\diffusionlimitenvironment$ in $(\cadlag,J_1)$, which holds because 
$\gamma_1\in\Omega_1'$, imply \eqref{goal_proof_convj1_inter}. 

There is a problem, however. The theorem cannot be applied \emph{tout
court} because neither $\diffusionscaledenvironmentnomean$
nor $\diffusionlimitenvironment$ are \cadlag functions, cf\
\eqref{hat-omega}-\eqref{Z}. On the other hand, we can use the 
considerations of Remark \ref{rmk-cadlag-ified}
to show that $\diffusionscaledenvironmentnomean_\mathrm{cadlag} \to
\diffusionlimitenvironment_\mathrm{cadlag}$ in $(\cadlag,J_1)$ and
thus, by \cite[Theorem 3.1]{whitt1980some},
\begin{equation}
\diffusionscaledenvironmentnomean_\mathrm{cadlag} \circ
\fluidscaledRW \circ\rho_n \longrightarrow 
\diffusionlimitenvironment_\mathrm{cadlag} \circ\avgRW\tinyspace\id 
\qquad \text{in } (\cadlag([0,T)), J_1).
\end{equation}
But the restrictions of $\diffusionlimitenvironment_\mathrm{cadlag}$ and
$\diffusionlimitenvironment$ to $[0,\avgRW T)$ coincide, so will obtain 
\eqref{goal_proof_convj1_inter} when we prove that 
$\diffusionscaledenvironmentnomean_\mathrm{cadlag} \circ
\fluidscaledRW \circ \rho_n$ is asymptotic to 
$\diffusionscaledenvironmentnomean \circ\fluidscaledRW \circ \rho_n$ in
$(\cadlag([0,T)), J_1)$, as $n \to \infty$. We will show more, namely that,
for some $C>0$,
\begin{equation} \label{marco4}
\sup_{t \in \R^+} \left| \diffusionscaledenvironmentnomean_\mathrm{cadlag} 
\circ \fluidscaledRW \circ\rho_n(t) - \diffusionscaledenvironmentnomean
\circ \fluidscaledRW \circ\rho_n(t) \right| \le \frac C {n^{1/\beta}} .
\end{equation}
In fact, with the help of \eqref{omega-n} and \eqref{hat-omega} observe that
\begin{equation}
\diffusionscaledenvironmentnomean_\mathrm{cadlag} (s) -
\diffusionscaledenvironmentnomean(s) = \left\{
\begin{array}{ll}
\ds \frac{\zeta_j} {n^{1/\beta}} & \mbox{if } s = \ds \frac j n, \, j \in \Z^-  \\[10pt]
0 & \mbox{otherwise}
\end{array}
\right.\,,
\end{equation}
where the numbers $\zeta_j$, for $j \in\Z^-$, are fixed, as the realization 
$\gamma_1 \in \Omega_1'$ of the medium is fixed. Now, the realization
$\gamma_2 \in B_\eta$ of the underlying random walk is also fixed. 
Since the drift $\mu$ is positive, $S_n < 0$ occurs only for a finite number 
of times $n$. The values of these excursions below zero and 
their times are contained in this chain of inequalities $S_{p(0)} \le 
S_{p(1)} \le \ldots \le S_{p(m-1)} < 0 \le S_{p(m)}$, for some $m\in \N$.  
Thus
\begin{equation}
\sup_{t \in [0, p(m)/n)} \left|
\diffusionscaledenvironmentnomean_\mathrm{cadlag} 
\circ \fluidscaledRW \circ\rho_n(t) - \diffusionscaledenvironmentnomean
\circ \fluidscaledRW \circ\rho_n(t) \right| \le 
\frac{\max \{ \zeta_j \}_{j=S_{p(0)} }^{-1} } {n^{1/\beta}}.
\end{equation}
Since the expression on the above l.h.s.\ is identically 0 for $t \ge p(m)/n$,
we have proved \eqref{marco4}, thus \eqref{goal_proof_convj1_inter}, thus
\eqref{lambda_conv_id}-\eqref{eq:conv_J2:c2}.

We are left to prove that $\tau_n = \rho_n\circ\lambda_n$ satisfies 
\eqref{marco5}. We do so with
the help of Proposition \ref{prop:ro_close_id}:
\begin{equation}
\begin{split}
\sup_{t\in[0,T)} \vert\rho_n\circ\lambda_n(t) -t\vert &= \sup_{s\in[0,T)} 
\vert\rho_n(s) -\lambda_n^{-1}(s)\vert \\
&= \sup_{i\in\{0,\ldots,nT-1\}}\sup_{s\in[i/n,(i+1)/n)} 
\Big\vert s - \frac{i-p(i)}{n}- \lambda_n^{-1}(s)\Big\vert \\
&\leq \sup_{s\in[0,T)} \vert s - \lambda_n^{-1}(s)\vert + 
\sup_{i\in\{0,\ldots,nT-1\}}\Big\vert\frac{p(i)-i}{n}\Big\vert,
\end{split}
\end{equation}
which converges to 0 as $n\to\infty$ by \eqref{lambda_conv_id} and 
\eqref{eq:ro_close_id}. This finally shows that 
$\diffusionscaledenvironmentnomean \circ \fluidscaledRW \to 
\diffusionlimitenvironment \circ \avgRW \tinyspace \id$ in $(\cadlag([0,T)), J_2)$ for 
all $(\gamma_1,\gamma_2)\in \Omega'_1\times B_{\eta}$, concluding the proof of
Theorem \ref{th-conv-j2}.



\subsection{Proof of Theorem \ref{th-3}: Limit theorems for the fluctuations} 
\label{subs-fluct}

Once again, we only prove the theorem in the case $\mu>0$. Using definitions 
\eqref{Omegatilde} and \eqref{Stilde} we write
\begin{equation} \label{decomposY}
n(\fluidscaledflight - \avgRW\avgenvironment \tinyspace \id)
= n^{1/\beta}\diffusionscaledenvironment \circ \fluidscaledRW +
n^{1/\alpha} \avgenvironment\diffusionscaledRW + \avgenvironment \avgRW
(\lfloor n \tinyspace \id \rfloor - n \tinyspace \id).
\end{equation}
The asymptotic behavior of \eqref{decomposY}
depends crucially on the ratio $\alpha/\beta$, hence we distinguish three cases. 
Observe that since $(\lfloor n\tinyspace \id \rfloor -n\tinyspace \id)$ is bounded, 
the last term of the above r.h.s.\ vanishes in the limit, whether we divide it by 
$n^{1/\alpha}$ or by $n^{1/\beta}$. 

\paragraph{Case $\alpha<\beta$.}

Substituting $\diffusionscaledenvironmentnomean$ with 
$\diffusionscaledenvironment$ and 
$\diffusionlimitenvironment$ with $\tilde{Z}^{(\beta)}$, cf.\
\eqref{convergence-tilde-omega}, in the proof of Theorem \ref{th-conv-j2},
one shows that, for $\beta \in (1,2)$ and $n\to\infty$,
\begin{equation}\label{composizioneOmegaS}
\diffusionscaledenvironment \circ \fluidscaledRW
\dconv
\avgRW^{1/\beta}\tilde{Z}_{+}^{(\beta)}
\inDPJ{2}.
\end{equation}
Since in this case $1/\alpha>1/\beta$, we obtain
\begin{equation} \label{marco11}
\frac{\diffusionscaledenvironment \circ \fluidscaledRW}
{n^{1/\alpha-1/\beta}} \dconv 0 \inDPJ{1}, 
\end{equation}
where we have observed that we can pass from $J_2$-convergence to 
$J_1$-convergence because both convergences reduce to uniform
convergence when the limit function is continuous, cf.\ Remark
\ref{rmk-local-unif-conv}. Finally, by 
\eqref{decomposY}, \eqref{marco11} and \eqref{convergence-tildeS},
\begin{equation}
\frac{n(\bar{Y}^{(n)}-\avgRW\avgenvironment\tinyspace\id)}{n^{1/\alpha}} 
\dconv \nu\widetilde{W}^{(\alpha)}\qquad \text{in }(\cadlag^+,J_1),
\end{equation}
which amounts to \eqref{eq:fCLT_alpha<beta}, as desired.

\paragraph{Case $\alpha>\beta$.}
Since in this case $1/\beta>1/\alpha$,
the leading order term in \eqref{decomposY} is the first term, whose limit has 
been identified in \eqref{composizioneOmegaS}.
We conclude that 
\begin{equation}
\frac{n(\fluidscaledflight-\avgRW\avgenvironment\tinyspace\id)}{n^{1/\beta}} 
\dconv \avgRW^{1/\beta}\,\tilde{Z}_{+}^{(\beta)} \inDPJ{2},
\end{equation}
i.e.,  \eqref{eq:fCLT_alpha>beta} holds for the case $\avgRW>0$.

\paragraph{Case $\alpha=\beta$.} Except for certain complications, the proof
of this case will follow the same ideas as that of Theorem \ref{th-conv-j2}
in Section \ref{subs3-3}. We will detail the parts that need a new argument and 
describe quickly those that are proved exactly as done earlier.


In view of \eqref{decomposY}, we rewrite our process of interest as
\begin{equation} \label{marco13}
\tilde{Y}^{(n)}=
\diffusionscaledenvironment \circ \fluidscaledRW +
\avgenvironment\diffusionscaledRW + \delta_n =
\ell \big( h \big( \diffusionscaledenvironment, \fluidscaledRW \big) , 
\avgenvironment \diffusionscaledRW \big) + \delta_n,
\end{equation}
where $h(x,y) \defeq x \circ y$ is the composition map from $\cadlag 
\times \cadlag^+$ to $\cadlag^+$ and $\ell(x,y) \defeq x + y$ is the addition map 
from $\cadlag^+ \times \cadlag^+$ to $\cadlag^+$. Also $\delta_n \defeq 
\avgenvironment \avgRW (\lfloor n \tinyspace \id \rfloor - n \tinyspace \id) / 
n^{1/\alpha}$ is a negligible term, as $n \to \infty$, in any relevant distance.
%
%
As was done in Section \ref{subs3-3}, we use the Skorokhod Representation 
Theorem twice to obtain two probability spaces $(\Omega_1, \mathbb P_1)$
and $(\Omega_2, \mathbb P_2)$, and processes
\begin{equation}\label{processi_bold_5}
\begin{array}{l}
\boldsymbol{\tilde{\omega}^{(n)}}:\Omega_1\to \mathcal{D},\\
\boldsymbol{\tilde{Z}^{(\alpha)}}: \Omega_1\to \mathcal{D},\\
\boldsymbol{\bar{S}^{(n)}}: \Omega_2\to\, \mathcal{D}^+,\\
\boldsymbol{\tilde{S}^{(n)}}: \Omega_2\to\, \mathcal{D}^+,\\
\boldsymbol{\widetilde{W}^{(\alpha)}}: \Omega_2\to \mathcal{D}^+,
\end{array}
\end{equation}
with respectively the same distribution as $\tilde{\omega}^{(n)}, 
\tilde{Z}^{(\alpha)}, \bar{S}^{(n)}, \tilde{S}^{(n)}$ and $\widetilde{W}^{(\alpha)}$,
and such that
\begin{equation}
\begin{array}{lll}
\boldsymbol {\tilde{\omega}}^{(n)} \asconv 
\boldsymbol{\tilde{Z}^{(\alpha)}} &\text{on } 
(\Omega_{1},\mathbb{P}_1) &\inDJ{1} \\
\boldsymbol{\fluidscaledRW} \asconv \avgRW \tinyspace \id 
&\text{on } (\Omega_{2},\mathbb{P}_2) & \inDPJ{1}\\
\boldsymbol{\tilde{S}^{(n)}} \asconv \boldsymbol{\widetilde{W}^{(\alpha)}} 
& \text{on } (\Omega_{2},\mathbb{P}_2) & \inDPJ{1}
\end{array}\,
\end{equation}

Once again, since the processes relative to the medium and those relative to 
the dynamics are independent, it is correct to regard all boldface processes as 
defined on $(\Omega_1 \times \Omega_2, \mathbb P_1 \times \mathbb P_2)$. 
Again we simplify the notation and use the regular typeset for all 
processes \eqref{processi_bold_5}. Let us
define 
\begin{align}
\Omega_1' & \defeq \{ \gamma_1 \in \Omega_1 \,:\, 
\diffusionscaledenvironment{[\gamma_1]} \to \tilde{Z}^{(\alpha)}[\gamma_1] 
\mbox{ in } (\cadlag, J_1) \}, \\
\Omega_2' & \defeq \{ \gamma_2 \in \Omega_2 \,:\, 
\diffusionscaledRW{[\gamma_2]} \to \widetilde{W}^{(\alpha)}[\gamma_2] 
\mbox{ in } (\cadlag^+, J_1) \}. 
\end{align}
These are full-measure sets in their respective spaces. Notice
that (essentially by the definition of $\diffusionscaledRW$)
$\fluidscaledRW{[\gamma_2]} \to \avgRW \tinyspace \id$, for all
$\gamma_2 \in \Omega_2'$. Now let us fix $T>0$. We already know that
for any $\eta \in (0,1)$, there exist $C_{\eta}>0$ and $\bar n_\eta \in\Naturals$ 
(both numbers depending on $T$ as well) such that the set
\begin{equation}
B_{\eta} \defeq \Big\{ \gamma_2\in\Omega'_2 : \sup_{t\in[0,T)}
\frac{n\vert \fluidscaledRW {[\gamma_2]} (t) - \avgRW \tinyspace t\vert}
{n^{1/\alpha}} \leq C_{\eta} \text{ for } n\geq  \bar n_\eta\Big\}
\end{equation}
has measure $\mathbb P_2(B_{\eta}) > 1-\eta$. 
Now one proceeds 
as in the proof of Theorem \ref{th-conv-j2}, using 
$\diffusionscaledenvironment$ and $\tilde{Z}^{(\alpha)}$ in place of 
$\diffusionscaledenvironmentnomean$ and 
$\diffusionlimitenvironment$, respectively. The fact that now 
$\beta = \alpha \in (1,2)$ causes no breaks in the proof. One obtains that, 
for all $(\gamma_1,\gamma_2) \in \Omega'_1\times B_{\eta}$,
\begin{equation} \label{marco12}
h(\diffusionscaledenvironment, \fluidscaledRW) =
\diffusionscaledenvironment\circ\fluidscaledRW \to \tilde{Z}^{(\alpha)} 
\circ \avgRW \tinyspace \id \qquad \text{in } (\cadlag([0,T)), J_2).
\end{equation}
By construction of $B_\eta$,  $\avgenvironment \diffusionscaledRW \to 
\avgenvironment \widetilde{W}^{(\alpha)}$ in $(\cadlag([0,T)), J_2)$ for 
all $\gamma_2 \in B_\eta$. If we are able to prove that,
for all $(\gamma_1,\gamma_2) \in \Omega'_1\times B_{\eta}$, 
the addition map $\ell$ is continuous at 
$( \tilde{Z}^{(\alpha)} \circ \avgRW \tinyspace \id ,\,  \avgenvironment 
\widetilde{W}^{(\alpha)})$ in the space $\cadlag([0,T)), J_2)$, we obtain by 
\eqref{marco13} that
\begin{equation} \label{finalgoal}
\lim_{n\to\infty} \tilde{Y}^{(n)} = \lim_{n\to\infty} \ell \big( 
h \big(\diffusionscaledenvironment, \fluidscaledRW \big), 
\avgenvironment\diffusionscaledRW \big) = 
\tilde{Z}^{(\alpha)} \circ \avgRW \tinyspace \id + \avgenvironment 
\widetilde{W}^{(\alpha)},
\end{equation}
for all realizations $(\gamma_1,\gamma_2) \in \Omega'_1\times B_{\eta}$.
Since $\eta \in (0,1)$ is arbitrary, the above limit extends to an 
almost sure limit in $(\Omega_1 \times \Omega_2, \mathbb P_1 \times 
\mathbb P_2)$. Passing to distributional convergence and (freely) varying 
the choice of $T$, we finally achieve \eqref{eq:fCLT_alpha_equals_beta}
for the case $\avgRW>0$. This ends the proof of Theorem \ref{th-3}.

It remains to show that the addition map on $(\mathcal{D}([0,T)), J_2)$
is continuous under standard conditions.
For a general space of \cadlag functions, 
Whitt proves that $\ell$ is continuous, relative to the topologies $J_1$, $M_1$ or 
$M_2$, at all pairs $(x,y)$ such that $\text{Disc}(x) \cap \text{Disc}(y) = \emptyset$
(see \cite[Theorem 4.1]{whitt1980some} and \cite[Corollaries 12.7.1 \& 
12.11.5]{Whitt}, respectively). In Theorem \ref{thm-j2cont} of the Appendix
we extend this statement to the topology $J_2$. As for its applicability to our 
case, notice that we can indeed assume that, for all $(\gamma_1,\gamma_2) \in 
\Omega'_1\times B_{\eta}$,
\begin{equation} \label{cond:discont}
\text{Disc}(\widetilde{Z}^{(\alpha)} \circ \avgRW\tinyspace \id) \cap 
\text{Disc}(\avgenvironment \tilde{W}^{(\alpha)}) = \emptyset,
\end{equation}
because $\widetilde{Z}^{(\alpha)}$ and $\tilde{W}^{(\alpha)}$ are
independent $\alpha$-stable processes and one can always remove from 
$\Omega_1' \times \Omega_2'$ the null set of realizations for which 
\eqref{cond:discont} does not hold. 

%
%

\appendix

\section{Appendix: Continuity of the addition map in $J_2$}

\begin{thm} \label{thm-j2cont}
Let $I$ be a (closed, open or half-open) bounded interval. The addition 
map $\ell: \cadlag(I) \times \cadlag(I) \into \cadlag(I)$ defined by 
$\ell(x,y) \defeq x+y$ is measurable and it is $J_2$-continuous 
at all pairs $(x,y)$ such that
\begin{equation}
\mathrm{Disc}(x) \cap \mathrm{Disc}(y) = \emptyset.
\end{equation}
The latter assertion amounts to the claim that $d_{J_2, I}(x_n, x)\to 0$ 
and $d_{J_2, I}(y_n, y)\to 0$, as $n\to\infty$, imply 
$d_{J_2, I}(x_n + y_n, x+y) \to 0$.
\end{thm}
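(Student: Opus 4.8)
The plan is to follow Whitt's argument for the $J_1$, $M_1$ and $M_2$ cases, the essential new feature being that a $J_2$ time change (Definition~\ref{def-j2}) is only a bijection and need not be monotone or continuous. As the measurability is easy (the $J_2$-Borel structure is generated by the coordinate evaluations, under which $\ell$ acts additively), I would concentrate on the continuity. Fix $\eps>0$ and assume $d_{J_2,I}(x_n,x)\to 0$ and $d_{J_2,I}(y_n,y)\to 0$. Since $x,y\in\cadlag(I)$ and $I$ is bounded, only finitely many jumps of $x$ and of $y$ exceed $\eps$; by the hypothesis $\mathrm{Disc}(x)\cap\mathrm{Disc}(y)=\emptyset$ these occur at distinct points. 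I would then record a finite partition of $I$ into subintervals on each of which \emph{both} $x$ and $y$ oscillate by less than $\eps$, its interior breakpoints being exactly the large jumps, each of which belongs to only one of the two functions. Around each breakpoint I place a small open neighbourhood, taking these pairwise disjoint and small enough that the \emph{other} function --- which by the no-common-jump hypothesis is continuous there --- oscillates by less than $\eps$ on it. This is the only place where $\mathrm{Disc}(x)\cap\mathrm{Disc}(y)=\emptyset$ is used.

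Next I would build a single time change. Let $\lambda_n,\mu_n:I\to I$ be bijections realizing the $J_2$ convergence, so that $\sup_{t\in I}|\lambda_n(t)-t|\to0$ and $\sup_{t\in I}|x_n(\lambda_n(t))-x(t)|\to0$, and analogously for $\mu_n,y_n$. Define $\nu_n$ to equal $\lambda_n$ on the neighbourhoods of the jumps of $x$, to equal $\mu_n$ on those of $y$, and to equal $\id$ elsewhere. On a neighbourhood of a jump of $x$ one has $x_n\circ\nu_n=x_n\circ\lambda_n\to x$ uniformly by construction, while for the other summand, setting $t':=\mu_n^{-1}(\lambda_n(t))$ one checks $\sup_t|t'-t|\to0$ (since $\sup_s|\mu_n^{-1}(s)-s|=\sup_u|\mu_n(u)-u|\to0$) and, using $\mu_n(t')=\lambda_n(t)$,
\[
  |y_n(\lambda_n(t))-y(t)| \le |y_n(\mu_n(t'))-y(t')| + |y(t')-y(t)|,
\]
whose first term vanishes uniformly and whose second is at most the oscillation of $y$ on the neighbourhood, hence $<\eps$ for $n$ large. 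The roles of $x$ and $y$ are symmetric on the neighbourhoods of the jumps of $y$, and on the complement the same device applied with $\lambda_n^{-1}$ and $\mu_n^{-1}$ gives $x_n\approx x$ and $y_n\approx y$, both functions having small oscillation there. Summing, one obtains that $\sup_{t\in I}|(x_n+y_n)(\nu_n(t))-(x+y)(t)|$ is bounded by a fixed multiple of $\eps$ plus a term vanishing as $n\to\infty$.

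The main obstacle is that $\nu_n$, as just described, need not be a bijection of $I$: because $\lambda_n$ and $\mu_n$ are merely bijections, the images of the neighbourhoods do not tile $I$ together with the identity part, so the naive stitching may fail to be one-to-one or onto on thin boundary layers. Here I would use precisely the freedom that distinguishes $J_2$ from $J_1$. The clean route is to arrange, without loss of generality, that $\lambda_n$ maps each $x$-neighbourhood onto itself and fixes its endpoints, and likewise for $\mu_n$; this modification costs only the oscillation of $x$ (resp.\ $y$) near those endpoints, which are continuity points with small oscillation by construction, and it leaves the estimate of the previous paragraph intact. With each piece now mapping its neighbourhood onto itself, the stitched $\nu_n$ is a genuine bijection of $I$, and $\sup_{t\in I}|\nu_n(t)-t|\to0$ follows from the corresponding bounds for $\lambda_n$ and $\mu_n$.

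Finally, for every $\eps>0$ this produces bijections $\nu_n$ with $\sup_{t\in I}|\nu_n(t)-t|\to 0$ and $\limsup_n \sup_{t\in I}|(x_n+y_n)(\nu_n(t))-(x+y)(t)|$ bounded by a fixed multiple of $\eps$, whence $\limsup_n d_{J_2,I}(x_n+y_n,x+y)$ is bounded by that same multiple of $\eps$; letting $\eps\to0$ gives the claim. The cases where $I$ is open or half-open (so that an endpoint might carry a jump) are handled by routine modifications near the endpoints. I expect the genuinely delicate point to be the bijectivity repair of the third paragraph: it is both the one place where $J_1$-type reasoning does not suffice and the one place where the non-monotone, possibly discontinuous, nature of $J_2$ time changes must be used constructively rather than merely tolerated.
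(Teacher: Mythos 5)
Your proposal is correct and follows essentially the same route as the paper's proof: the same oscillation partition (via Billingsley's lemma) with the large jumps of $x$ and $y$ kept apart by the no-common-discontinuity hypothesis, the same stitched time change (one bijection on the $x$-jump neighbourhoods, the other on the $y$-jump neighbourhoods, the identity elsewhere), and the same telescoping estimates through $\mu_n^{-1}\circ\nu_n$. The normalization you assert ``without loss of generality'' --- that each bijection can be modified to map each jump neighbourhood onto itself at the price of the local oscillation --- is precisely the paper's conditions $\mu_n(\mathcal{J}_i^{(x)})=\mathcal{J}_i^{(x)}$ and $\nu_n(\mathcal{J}_j^{(y)})=\mathcal{J}_j^{(y)}$, which the paper then constructs explicitly in the second half of its proof (keeping the original bijection on a core interval and matching the leftover, possibly highly irregular, image sets by a cardinality-of-the-continuum argument); since you correctly identify both the need for this repair and the mechanism that makes it work, this is a matter of unexecuted detail rather than a genuine gap.
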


\begin{proof}
We follow the same line of arguments as in the proof of 
\cite[Theorem 4.1]{whitt1980some}. In fact, the measurability of
$\ell$ is proved exactly as in the referenced theorem. As for the 
continuity claim, we fix $I \defeq [a,b)$, which is the case needed
in Section \ref{subs-fluct}. The other three cases, $I = [a,b]$, 
$I = (a,b]$ or $I = (a,b)$, are proved exactly in the same way.
Without loss of generality, we also assume to work with 
\cadlag functions (as opposed to functions that have a \cadlag and a 
c\`agl\`ad restriction). This case happens, e.g., if $0 \le a < b$.

For a fixed $\eps>0$, we must show that there exist $\bar n \in \Z^+$ and,
for all $n \ge \bar n$, bijections $\l_n : [a,b) \into [a,b)$ such that
\begin{align}
\label{gamma-1}
\sup_{t \in [a,b)} | \l_n(t) - t | &< \eps, \\
\label{gamma-2}
\sup_{t \in [a,b)} | (x_n + y_n) \circ \l_n(t) - (x+y)(t) | &< \eps.
\end{align}
Since $x,y$ are c\`adl\`ag, by \cite[Chapter 3, Lemma 1]{Billingsley} there 
exist two finite sets of points 
\begin{align}
\mathcal{P}_x&=\{a=t_0,t_1,...,t_n,t_{n+1}=b\} \\
\mathcal{P}_y&=\{a=s_0,s_1,...,s_m,s_{m+1}=b\}
\end{align} 
such that, for all $i=1,\ldots, n+1$ and $j=1,\ldots, m+1$,
\begin{align}
\label{mod_cont1}
&\sup_{q_{1},q_{2} \in [t_{i-1}, t_i)} |x(q_{1}) - x(q_{2})| < \frac \eps 8, \\
\label{mod_cont2}
&\sup_{q_{1},q_{2} \in [s_{i-1}, s_i)}|y(q_{1}) - y(q_{2})| < \frac \eps 8.
\end{align}
From this construction we have that the discontinuity points of $x$ 
(respectively $y$) with jump size bigger than $\eps/8$ are contained in 
$\mathcal{P}_x$ (respectively $\mathcal{P}_y$). By hypothesis these
two sets of points are disjoint. Moreover, we can select the other points of 
$\mathcal{P}_x$ and $\mathcal{P}_y$ so that $\mathcal{P}_{x} \cap 
\mathcal{P}_{y} =\{a,b\}$. 
Let $4 \delta$ be the distance between the closest pair of points of 
$\mathcal{P} \defeq \mathcal{P}_x \cup \mathcal{P}_y$.
For $i=1,\ldots,n$ and $j=1,\ldots,m$, 
we construct closed intervals $\mathcal{J}_i^{(x)}$ and 
$\mathcal{J}_j^{(y)}$ such that
\begin{align}
\label{J-x}
[t_i-\delta, t_i+\delta] \subset \mathrm{int} \big( \mathcal{J}_i^{(x)} \big) 
&\subset \mathcal{J}_j^{(y)} \subset (t_i-2\delta, t_i+2\delta), \\
\label{J-y}
[s_j-\delta, s_j+\delta] \subset \mathrm{int} \big( \mathcal{J}_i^{(x)} \big) 
&\subset \mathcal{J}_j^{(y)} \subset (s_j-2\delta, s_j+2\delta).
\end{align}
This implies in particular that these intervals are pairwise disjoint.

Now let us assume that 
there exist $\bar n\in\Z^+$ and bijections $\mu_n, \nu_n: 
[a,b)\into [a,b)$ so that, for all $n \ge \bar n$, 
\begin{align}
\label{cont_addiz_biez1}
\sup_{t \in [a,b)}|\mu_n(t) - t | < \min \{\eps,\delta \}, 
\quad \sup_{t \in [a,b)} | x_n \circ \mu_n(t)- x (t) | &< \frac \eps 4,  
\quad\mu_n(\mathcal{J}_i^{(x)}) = \mathcal{J}_i^{(x)} \\
\label{cont_addiz_biez2}
\sup_{t \in [a,b)}|\nu_n(t) - t | < \min \{\eps,\delta \},  
\quad \sup_{t \in [a,b)} | y_n\circ \nu_n(t) - y(t)  | &< \frac \eps 4, 
\quad \nu_n(\mathcal{J}_j^{(y)}) = \mathcal{J}_j^{(y)}
\end{align}
for all $i=1,\ldots, n$ and $j=1,\ldots, m$.
The first and second conditions in both \eqref{cont_addiz_biez1} and 
\eqref{cont_addiz_biez2} can be satisfied by the hypotheses
$x_n \to x$, $y_n \to y$ in $(\cadlag([a,b)), J_2)$. We postpone 
for a moment the proof that $\mu_n, \nu_n$ can be found to
satisfy the third conditions as well. Let us construct the  
bijection $\l_n:[a,b)\into [a,b)$ as follows:
\begin{equation} \label{del-lambda-n}
\l_n(t) := 
\begin{cases}
\mu_n(t) &\mbox{ for } t \in \mathcal{J}_i^{(x)} \mbox{ with } i=1,\ldots, n, \\
\nu_n(t) &\mbox{ for } t \in \mathcal{J}_j^{(y)} \mbox{ with } j=1,\ldots, m, \\
t & \mbox{ otherwise}.
\end{cases}
\end{equation}
We have the following estimates:
\begin{equation} \label{stime-gamma-Jx}
\sup_{t \in \mathcal{J}_i^{(x)}} | x_n \circ \l_n (t) - x(t) | = 
\sup_{t \in \mathcal{J}_i^{(x)}} | x_n \circ \mu_n (t) - x(t) | < \frac \eps 4 ,
\end{equation}
by \eqref{cont_addiz_biez1}, and 
\begin{equation} \label{stime-gamma-Jy}
\begin{split}
&\sup_{t \in \mathcal{J}_j^{(y)}} | x_n \circ \l_n (t) - x(t) | \\
&\qquad = \sup_{t \in \mathcal{J}_j^{(y)}} | x_n \circ \nu_n (t) - x(t) | \\
&\qquad \le \sup_{t \in \mathcal{J}_j^{(y)}} 
| x_n \circ \nu_n (t) - x \circ \mu_n^{-1} \circ \nu_n (t) | + 
\sup_{t \in \mathcal{J}_j^{(y)}} | x \circ \mu_n^{-1} \circ \nu_n (t) - x(t) | \\
&\qquad \le \sup_{u \in [a,b)} | x_n \circ \mu_n (u) - x(u) | + 
\sup_{t \in \mathcal{J}_j^{(y)}} | x (\mu_n^{-1} \circ \nu_n (t)) - x(t) | \\
&\qquad < \frac \eps 4 + \frac \eps 8 = \frac{3\eps} 8 .
\end{split}
\end{equation}
In the first term of the final estimate of \eqref{stime-gamma-Jy} we have 
renamed $u \defeq \mu_n^{-1} \circ \nu_n (t)$ and used \eqref{cont_addiz_biez1}.
For the second term we have observed that, by 
\eqref{cont_addiz_biez1}-\eqref{cont_addiz_biez2}, the bijection
$\mu_n^{-1} \circ \nu_n$ is closer to the identity than $2\delta$. Since
$t \in \mathcal{J}_j^{(y)}$ this implies, by \eqref{J-y}, that $t$ and 
$\mu_n^{-1} \circ \nu_n (t)$ belong to the same interval $[t_{i-1}, t_i)$, for some
$i$. Thus we have used \eqref{mod_cont1}. Lastly, if we denote
$\mathcal{J} \defeq \big( \bigsqcup_{i=1}^n \mathcal{J}_i^{(x)} \big) \sqcup 
\big(\bigsqcup_{j=1}^m \mathcal{J}_j^{(y)} \big)$,
\begin{equation} \label{stime-gamma-J}
\begin{split}
&\sup_{t \in [a,b) \setminus \mathcal{J} } | x_n \circ \l_n (t) - x(t) | \\
&\qquad =\sup_{t \in [a,b) \setminus \mathcal{J} } | x_n  (t) - x(t) | \\
&\qquad \le  \sup_{t \in [a,b) \setminus  \mathcal{J} } | x_n  (t) - x \circ \mu_n^{-1}(t) | 
+ \sup_{t \in [a,b) \setminus \mathcal{J} } |  x \circ \mu_n^{-1}(t) - x(t) | \\
&\qquad \le \sup_{u \in [a,b)} | x_n \circ \mu_n (u) - x(u) | + 
\sup_{t \in [a,b) \setminus \mathcal{J} } | x (\mu_n^{-1}(t)) - x(t) | \\
&\qquad < \frac \eps 4 + \frac \eps 8  = \frac{3\eps} 8 ,
\end{split}
\end{equation}
where we have used 
the same arguments as for \eqref{stime-gamma-Jy}: observe in fact that
if $t \not\in \mathcal{J}$ then, by \eqref{J-x}-\eqref{J-y}, $t$ is at distance 
larger than
$\delta$ from $\mathcal{P} \setminus \{a,b\}$. By \eqref{cont_addiz_biez1}
$| \mu_n^{-1}(t) - t| < \delta$ and so $t$ and $\mu_n^{-1}(t)$ belong to the 
same interval $[t_{i-1}, t_i)$, for some $i$, triggering \eqref{mod_cont1}.

From \eqref{stime-gamma-Jx}-\eqref{stime-gamma-J} we have that 
$\sup_{t \in [a,b) }  | x_n \circ \l_n (t) - x(t) | < \eps/2$, and the same obviously 
holds for $y$, whence
\begin{equation} 
\begin{split}
&\sup_{t \in [a,b) }  | (x_n + y_n)\circ\l_n(t) - (x+y)(t) | \\
&\qquad \le \sup_{t \in [a,b) }  | x_n\circ \l_n(t) - x(t)  | + 
\sup_{t \in [a,b) }  | y_n\circ \l_n(t) - y(t)  | < \eps ,
\end{split}
\end{equation}
giving \eqref{gamma-2}. The inequality \eqref{gamma-1} follows from 
definition \eqref{del-lambda-n} and 
\eqref{cont_addiz_biez1}-\eqref{cont_addiz_biez2}
and so Theorem \ref{thm-j2cont} is proved.

\medskip

It remains to show that the bijections $\mu_n, \nu_n$ can be chosen
to satisfy the rightmost conditions of 
\eqref{cont_addiz_biez1}-\eqref{cont_addiz_biez2},
for a suitable choice of the intervals $\{ \mathcal{J}_i^{(x)} \}_{i=1}^n$,  
$\{ \mathcal{J}_j^{(y)} \}_{j=1}^m$.  We proceed by explicitly constructing
$\mu_n$, as the construction of $\nu_n$ is completely analogous.

The hypothesis $d_{J_2, [a,b)}(x_n, x)\to 0$ amounts to the existence
of bijections $\rho_n : [a,b) \into [a,b)$ such that, for $n \ge \bar n$,
\begin{align}
\label{ell-1}
\sup_{t \in [a,b)} |\rho_n(t) - t | &< \frac12 \min \{ \eps, \delta \} , \\
\label{ell-2}
\sup_{t \in [a,b)} | x_n \circ \rho_n(t) - x(t)  | &< \frac \eps 8 .
\end{align}
For $i = 1, \ldots, n$, set $[a_i,b_i] := [t_i - \delta, t_i + \delta]$ and
\begin{equation} \label{marco19}
\mathcal{J}_i^{(x)} \defeq [a_i'',b_i''] \defeq 
\left[ \inf \! \big( \{ a_i \} \cup \rho_n( [a_i, b_i] ) \big) - \frac{\delta}{2} \,,\,
\sup \! \big( \{ b_i \} \cup \rho_n( [a_i, b_i]) \big) + \frac{\delta}{2} \right].
\end{equation}
By construction $|a_i'' - t_i| < 2 \delta$ and $|b_i'' - t_i| < 2 \delta$. Hence the 
intervals $\mathcal{J}_i^{(x)}$ satisfy \eqref{J-x}.

The bijection $\mu_n : [a,b) \into [a,b)$ is defined with the following structure:
\begin{equation} \label{def-mu-n}
\mu_n(t) := 
\begin{cases}
\mu_n^{(i)}(t) & \mbox{ for } t \in \mathcal{J}_i^{(x)} \mbox{ with } 
i=1,\ldots, n, \\
t & \mbox{ otherwise},
\end{cases}
\end{equation}
where $\mu_n^{(i)} : \mathcal{J}_i^{(x)} \into \mathcal{J}_i^{(x)}$ are 
bijections that we construct in several steps as follows. 

First, on $[a_i,b_i] \subset \mathcal{J}_i^{(x)}$, we define 
$\mu_n^{(i)} |_{ [a_i,b_i] } := \rho_n |_{ [a_i,b_i] }$. 
In light of \eqref{marco19} and applying
\eqref{ell-1} to $t \in [a_i, b_i]$, we see that
\begin{equation}
\mu_n^{(i)} ( [a_i,b_i] ) = \rho_n( [a_i, b_i] ) 
\subset (a_i'' , b_i'') \subset \mathcal{J}_i^{(x)}.
\end{equation}
Denote $\mathcal A_i := [a_i'',t_i) \setminus \mu_n^{(i)} ( [a_i,b_i] )$ and 
$\mathcal B_i := [t_i,b_i''] \setminus \mu_n^{(i)} ( [a_i,b_i] )$. These are, 
respectively, the lower and upper parts of $[a_i'',b_i''] = \mathcal{J}_i^{(x)}$ 
that have not yet been assigned as image points of $\mu_n^{(i)}$ (which is
only partially defined at this stage).
Using the fact that the inequality \eqref{ell-1} is strict, we can find 
$\eta$ with $0 < \eta < \min \{ \eps, \delta \} /2$ such that
\begin{align}
\label{marco22}
\inf \mu_n^{(i)} ( [a_i,b_i] ) >  a_i - \eta \,, &\qquad 
\sup \mathcal A_i < a_i + \eta \,, \\
\label{marco23}
\sup \mu_n^{(i)} ( [a_i,b_i] ) < b_i + \eta \,, &\qquad 
\inf \mathcal B_i > b_i - \eta .
\end{align}
Set $a_i' \defeq a_i-\eta$ and $b_i' \defeq b_i+\eta$. We have $a_i'' < 
a_i' < a_i < b_i < b_i' <b_i''$. The inequalities \eqref{marco22} show 
that the yet-to-be-assigned image set $\mathcal A_i$ can we written as
\begin{equation}
\mathcal A_i = [a_i'',a_i'] \sqcup ( \mathcal A_i \cap (a_i', a_i + \eta)),
\end{equation}
where $\mathcal A_i \cap (a_i', a_i + \eta)$ has the cardinality of the
continuum because, by the first inequality of \eqref{marco22}, there
exists $\sigma>0$ such that $(a_i', a_i' + \sigma) \subset
\mathcal A_i \cap (a_i', a_i + \eta)$. By reasons of cardinality, then, 
there exists a bijection $\phi_i^- : (a_i', a_i) \into \mathcal A_i 
\cap (a_i', a_i + \eta)$. By construction, since $a_i' = a_i - \eta$,
\begin{equation} \label{marco24}
\sup_{ t \in (a_i', a_i) } | \phi_i^-(t) - t | \le 2\eta < \min \{ \eps, \delta \}.
\end{equation}
We define $\mu_n^{(i)} |_{ (a_i',a_i) } \defeq \phi_i^-$ and
$\mu_n^{(i)} |_{ [a_i'',a_i'] } \defeq \id$. Analogously, the inequalities 
\eqref{marco23} give
\begin{equation}
\mathcal B_i = ( \mathcal B_i \cap (b_i - \eta, b_i')) \sqcup [b_i', b_i'']
\end{equation}
and there exists a bijection $\phi_i^+ : (b_i, b_i') \into 
\mathcal B_i \cap (b_i - \eta, b_i')$ for which the analogue of estimate
\eqref{marco24} holds. Finally, we define  $\mu_n^{(i)} |_{ (b_i,b_i') } \defeq 
\phi_i^+$ and $\mu_n^{(i)} |_{ [b_i',b_i''] } \defeq \id$. This completes the 
definition of $\mu_n^{(i)}$ as a bijection of $\mathcal{J}_i^{(x)}$.

By \eqref{ell-1}, \eqref{marco24} and its analogue for $\phi_i^+$, we see
that
\begin{equation} \label{close-id2}
\sup_{t \in \mathcal{J}_i^{(x)} } | \mu_n^{(i)} (t) - t | < \min \{ \eps, \delta \} .
\end{equation}
Also, by the definition of $\mu_n^{(i)} |_{ [a_i,b_i] }$ and \eqref{ell-2},
\begin{equation} \label{marco20}
\sup_{t \in [a_i,b_i]} | x_n \circ \mu_n^{(i)} (t) - x(t) | < \frac \eps 8 .
\end{equation}
Furthermore,
\begin{align}
&\sup_{t \in \mathcal{J}_i^{(x)} \setminus [a_i,b_i]} 
| x_n \circ \mu_n^{(i)} (t) - x(t) | \nonumber \\
&\qquad \le \sup_{t \in \mathcal{J}_i^{(x)} \setminus [a_i,b_i]} \!
| x_n \circ \mu_n^{(i)} (t) - x \circ \rho_n^{-1} \circ \mu_n^{(i)} (t) | 
+ \! \sup_{t \in \mathcal{J}_i^{(x)} \setminus [a_i,b_i]} \!
| x \circ \rho_n^{-1} \circ \mu_n^{(i)} (t) - x(t) | \nonumber \\[2pt]
\label{j-i}
&\qquad \le \frac \eps 8 + \frac \eps 8 = \frac \eps 4.
\end{align}
The above estimates are derived in a way similar to that used in  
\eqref{stime-gamma-Jy}: for the first term we use \eqref{ell-2} after the change
of variable $u \defeq \rho_n^{-1} \circ \mu_n^{(i)} (t)$; for the second term we
use \eqref{mod_cont1} and the fact that $t$ and $\rho_n^{-1} \circ \mu_n^{(i)} (t)$
belong to the same interval $[t_{k-1}, t_k)$, for some $k$ (this is because,
due to \eqref{ell-1} and \eqref{close-id2}, $t \in \mathcal{J}_i^{(x)}$ and 
$| \rho_n^{-1} \circ \mu_n^{(i)} (t) - t | < 3\delta/2$). Moreover, 
denoting $\mathcal{J}^{(x)} \defeq \bigsqcup_{i=1}^n \mathcal{J}_i^{(x)}$,
it is now easy to use \eqref{ell-2}, \eqref{J-x}  and \eqref{mod_cont1} to 
estimate
\begin{equation} \label{marco21}
\begin{split}
&\sup_{t \in [a,b) \setminus \mathcal{J}^{(x)} } | x_n \circ \mu_n (t) - x(t) | \\
&\qquad = \sup_{t \in [a,b) \setminus \mathcal{J}^{(x)} } | x_n  (t) - x(t) | \\
&\qquad \le \sup_{t \in [a,b) \setminus  \mathcal{J}^{(x)} } 
| x_n  (t) - x \circ \rho_n^{-1}(t) | \: + \sup_{t \in [a,b) \setminus 
\mathcal{J}^{(x)}  } |  x \circ \rho_n^{-1}(t) - x(t) | \\[2pt]
&\qquad < \frac \eps 8 + \frac \eps 8  = \frac \eps 4 .
\end{split}
\end{equation}

Finally, the definition \eqref{def-mu-n} of $\mu_n$ and the inequalities 
\eqref{close-id2}-\eqref{marco21} yield \eqref{cont_addiz_biez1} and
conclude the proof of Theorem \ref{thm-j2cont}.
\end{proof}

\begin{rem} \label{rmk-j32}
One can define a new topology of the Skorokhod type in the same way
as Definitions \ref{def-j1-i} and \ref{def-j2} but taking the infimum in 
\eqref{d-j1-i} over all \emph{piecewise increasing and continuous} (PIC)
bijections $\lambda: I \into I$. A PIC bijection $\lambda: I \into I$ is one
such that $I$ can be partitioned into a finite number of intervals, on each of
which $\lambda$ is increasing and continuous. Observe that in this case 
$\lambda^{-1}$ is also a PIC bijection. For want of a better name, let us call 
this topology $J_{3/2}$. Evidently, $J_{3/2}$ is weaker than $J_1$ and stronger 
than $J_2$. It is not hard to see that Theorem \ref{thm-j2cont} can be proved 
as well with the $J_{3/2}$-distance in place of the $J_2$-distance. Furthermore, 
in the proofs of Theorems \ref{th-conv-j2} and \ref{th-3}, every time we needed 
to construct a sequence of bijections in order to prove a $J_2$-convergence, 
we have indeed produced a sequence of PIC bijections. Therefore, all 
assertions in this paper that are stated for the topology $J_2$, see 
\eqref{eq-conv-j2}, \eqref{eq:fCLT_alpha>beta}, 
\eqref{eq:fCLT_alpha_equals_beta}, hold for the topology $J_{3/2}$ as well.
\end{rem}

\small


\begin{thebibliography}{ACOR}

\bibitem[ACOR]{acor} \article{R.~Artuso, G.~Cristadoro, M.~Onofri,
M.~Radice} {Non-homogeneous persistent random walks and
L\'evy-Lorentz gas} {J. Stat. Mech. Theory Exp. \vol{2018}, no. 8,
083209, 13 pp}

\bibitem[BFK]{bfk} \article{E.~Barkai, V.~Fleurov, J.~Klafter}
{One-dimensional stochastic L\'evy-Lorentz gas} {Phys. Rev. E \vol{61}
(2000), no.~2, 1164--1169}

\bibitem[BR]{br} \article{N.~Berger, R.~Rosenthal} {Random walks on
discrete point processes} {Ann. Inst. Henri Poincar\'e Probab. Stat. \vol{51}
(2015), no.~2, 727--755}

\bibitem[BCLL]{bcll} \article{A.~Bianchi, G.~Cristadoro, M.~Lenci,
M.~Ligab\`o} {Random walks in a one-dimensional L\'evy random
environment} {J. Stat. Phys. \vol{163} (2016), no.~1, 22--40}

\bibitem[BLP]{blp} \article{A.~Bianchi, M.~Lenci, F.~P\`ene}
{Continuous-time random walk between L\'evy-spaced targets in the real line}
{Stochastic Process. Appl. \vol{130} (2020), no.~2, 708-732}

\bibitem[B]{Billingsley} \book{P.~Billingsley} {Convergence of probability 
measures} {John Wiley \& Sons, Inc., New York-London-Sydney, 1968}

\bibitem[BCV]{bcv} \article{R.~Burioni, L.~Caniparoli, A.~Vezzani}
{L\'evy walks and scaling in quenched disordered media}
{Phys. Rev. E \vol{81} (2010), 060101(R), 4 pp}

\bibitem[BDLV]{bdlv} \article{R.~Burioni, S.~di Santo, S.~Lepri, A.~Vezzani}
{Scattering lengths and universality in superdiffusive L\'evy materials}
{Phys. Rev. E \vol{86} (2012), 031125, 7 pp}


\bibitem[CF]{cf} \article{P.~Caputo, A.~Faggionato} {Diffusivity in
one-dimensional generalized Mott variable-range hopping models}
{Ann. Appl. Probab. \vol{19} (2009), no.~4, 1459--1494}

\bibitem[CFP]{cfp} \article{P.~Caputo, A.~Faggionato, T.~Prescott}
{Invariance principle for Mott variable range hopping and other walks
on point processes} {Ann. Inst. Henri Poincar\'e Probab. Stat. \vol{49}
(2013), no.~3, 654--697}

\bibitem[CGLS]{cgls} \article{G.~Cristadoro, T.~Gilbert, M.~Lenci,
D.~P.~Sanders} {Transport properties of L\'evy walks: an analysis in terms
of multistate processes} {Europhys. Lett. \vol{108} (2014), no.~5, 50002,
6 pp}

\bibitem[EK]{ek} \book{S.~N.~Ethier, T.~G.~Kurtz} {Markov processes.
Characterization and convergence} {John Wiley \& Sons, Inc., 
New York, 1986}

\bibitem[JS]{JS1987} \book{J.~Jacod, A.~Shiryaev} {Limit theorems for 
stochastic processes} {Springer-Verlag, Berlin, 1987}

\bibitem[KRS]{krs} \book{R.~Klages, G.~Radons, I.~M.~Sokolov (eds.)}
{Anomalous Transport: Foundations and Applications} {Wiley-VCH, Berlin,
2008}

\bibitem[K]{k} \article{N.~Kubota} {Quenched invariance principle for
simple random walk on discrete point processes} {Stochastic Process.
Appl. \vol{123} (2013), no.~10, 3737--3752}

\bibitem[L]{l} \article{P.~Levitz} {From Knudsen diffusion to Levy
walks} {Europhys. Lett. \vol{39} (1997), no.~6, 593--598}

\bibitem[MS]{ms} \article{M.~Magdziarz, W.~Szczotka} {Diffusion limit
of L\'evy-Lorentz gas is Brownian motion} {Commun. Nonlinear Sci.
Numer. Simul. \vol{60} (2018), 100--106}

\bibitem[ROAC]{roac} \article{M.~Radice, M.~Onofri, R.~Artuso,
G.~Cristadoro} {Transport properties and ageing for the averaged L\'evy-Lorentz
gas} {J. Phys. A \vol{53} (2020), no.~2, 025701, 16 pp}

\bibitem[ROAP]{roap} \article{M.~Radice, M.~Onofri, R.~Artuso, G.~Pozzoli}
{Statistics of occupation time and connection to local properties 
of non-homogeneous random walks}{Phys. Rev. E \vol{101} (2020), no.~4, 
042103, 14 pp} 

\bibitem[R]{r} \article{A.~Rousselle} {Annealed invariance principle for
random walks on random graphs generated by point processes in $\R^d$}
{Markov Process. Related Fields \vol{22} (2016), no.~4, 653--696}

\bibitem[S]{s} \article{M.~Schulz} {L\'evy flights in a quenched jump length
field: a real space renormalization group approach} {Phys. Lett. A \vol{298}
(2002), no.~2-3, 105--108}

\bibitem[SZF]{szf} \book{M.~Shlesinger, G.~Zaslavsky, U.~Frisch (eds.)}
{L\'evy Flights and Related Topics in Physics} {Lecture Notes in Physics
\vol{450}. Springer-Verlag, Berlin, 1995}

\bibitem[VBB]{vbb} \article{A.~Vezzani, E.~Barkai, R.~Burioni}
{Single-big-jump principle in physical modeling} 
{Phys. Rev. E \vol{100} (2019), 012108} 


\bibitem[W1]{whitt1980some} \article{W.~Whitt} {Some useful functions 
for functional limit theorems} {Math. Oper. Res. \vol{5} (1980), no.~1, 
67--85}

\bibitem[W2]{Whitt} \book{W.~Whitt} {Stochastic-process limits. 
An introduction to stochastic-process limits and their application to 
queues} {Springer-Verlag, New York, 2002}

\bibitem[ZDK]{zdk} \article{V.~Zaburdaev, S.~Denisov, J.~Klafter}
{L\'evy walks} {Rev. Mod. Phys. \vol{87} (2015), 483--530}

\bibitem[Za]{zamparo} \article{M.~Zamparo} {Large fluctuations and transport 
properties of the L\'evy-Lorentz gas} {arXiv:2010.09083}

\bibitem[Z]{z} \article{L.~ Zhu} {Large deviations for one-dimensional
random walks on discrete point processes} {Statist. Probab. Lett. \vol{97}
(2015), 69--75}

\end{thebibliography}
\end{document}